\titlespacing\paragraph{0pt}{4pt plus 2pt minus 2pt}{4pt plus 2pt minus 2pt}
\newcommand{\Eset}{\mathbb{E}}
\newcommand{\Rset}{\mathbb{R}}
\newcommand{\Acal}{{\cal A}}
\newcommand{\Bcal}{{\cal B}}
\newcommand{\Fcal}{{\cal F}}
\newcommand{\Mcal}{{\cal M}}
\newcommand{\Ocal}{{\cal O}}
\newcommand{\Pcal}{{\cal P}}
\newcommand{\Qcal}{{\cal Q}}
\newcommand{\Scal}{{\cal S}}
\newcommand{\Abf}{{\bf A}}
\newcommand{\xhat}{{\hat{x}}}
\newcommand{\yhat}{{\hat{y}}}
\newtheorem{lem}{Lemma}
\newtheorem{thm}{Theorem}
\newtheorem{assump}{Assumption}
\newtheorem{remark}{Remark}
\date{}
\title{\LARGE Fast Nonlinear Two-Time-Scale Stochastic Approximation: Achieving $\Ocal(1/k)$ Finite-Sample Complexity}
\author{
Thinh T. Doan\thanks{Thinh T. Doan is with the Bradley Department of Electrical and Computer Engineering, Virginia Tech, USA. Email: {\tt\small thinhdoan@vt.edu}}
}
\begin{document}

\maketitle
\begin{abstract}
This paper proposes to develop a new variant of the two-time-scale stochastic approximation to find the roots of two coupled nonlinear operators, assuming only noisy samples of these operators can be observed. Our key idea is to leverage the classic Ruppert–Polyak averaging technique to dynamically estimate the operators through their samples. The estimated values of these averaging steps will then be used in the two-time-scale stochastic approximation updates to find the desired solution. Our main theoretical result is to show that under the strongly monotone condition of the underlying nonlinear operators the mean-squared errors of the iterates generated by the proposed method converge to zero at an optimal rate $\Ocal(1/k)$, where $k$ is the number of iterations. Our result significantly improves the existing result of two-time-scale stochastic approximation, where the best known finite-time convergence rate is $\Ocal(1/k^{2/3})$. We illustrate this result by applying the proposed method to develop new reinforcement learning algorithms with improved performance.  
\end{abstract}


\section{Introduction}\label{sec:intro}
In this paper, we study the problem of finding the root of two coupled nonlinear operators. In particular, given two operators $F:\Rset^{d}\times\Rset^{d}\rightarrow\Rset^{d}$ and $G:\Rset^{d}\times\Rset^{d}\rightarrow\Rset^{d}$, we seek to find $x^{\star}$ and $y^{\star}$ such that
\begin{align}
\left\{
\begin{aligned}
&F(x^{\star},y^{\star}) = 0\\
&G(x^{\star},y^{\star}) = 0.
\end{aligned}\right.\label{prob:FG}
\end{align}
We will be interested in the setting where $F,G$ are unknown. Instead, we assume that there is a stochastic oracle that outputs noisy values of $F(x,y)$ and $G(x,y)$ for a given pair $(x,y)$, i.e.,  for any $x$ and $y$ we have access to $F(x,y)\, +\, \xi$ and $G(x,y)\, +\, \psi$, where $\xi$ and $\psi$ are two random variables. For solving problem \eqref{prob:FG} in this stochastic setting, two-time-scale stochastic approximation (SA) \cite{borkar2008}, a generalized variant of the classic SA \cite{RobbinsM1951}, is a natural choice. This method iteratively updates the variables $x_{k}$ and $y_{k}$, the estimates of $x^{\star}$ and $y^{\star}$, respectively, for all $k\geq 0$ as
\begin{align}
\begin{aligned}
x_{k+1} &= x_{k} - \alpha_{k}\left(F(x_{k},y_{k}) + \xi_{k}\right)\\   
y_{k+1} &= y_{k} - \beta_{k}\left(G(x_{k},y_{k}) + \psi_{k}\right),
\end{aligned}\label{alg:xy}
\end{align}
where $x_{0}$ and $y_{0}$ are arbitrarily initialized in $\Rset^{d}$. In \eqref{alg:xy}, $\alpha_{k}$ and $\beta_{k}$ are two nonnegative step sizes chosen such that $\beta_{k}\ll \alpha_{k}$, i.e., the second iterate is updated using step sizes that are very small as compared to the ones used to update the first iterate.  Thus, the update of $x_{k}$ is referred to as the ``fast-time scale" while the update of $y_{k}$ is called the ``slow-time scale", explaining the name of two-time-scale SA. The time-scale difference here is loosely defined as the ratio between the two step sizes, i.e., $\beta_{k}/\alpha_{k}$. In addition, the update of the \textit{fast iterate} depends on the \textit{slow iterate} and vice versa, that is, they are coupled to each other. To handle this coupling, the two step sizes have to be properly chosen to guarantee the convergence of the method. Indeed, an important problem in this area is to properly select the two step sizes so that the two iterates converge as fast as possible. This task, however, is challenging due to the coupling between the iterates and the impact of sampling noise on their updates. 

At present, the asymptotic convergence of two-time-scale SA, i.e., whether $(x_{k},y_{k})$ converges to $(x^{\star},y^{\star})$, is well-studied by using the method of ordinary differential equation (ODE). This result can be found in several textbooks, see for example \cite{borkar2008,benveniste2012adaptive}. The asymptotic convergence rates of this method have been also studied via the central limit theorem in both linear and nonlinear settings \cite{KondaT2004,MokkademP2006,hu2024central}. In particular, these works show that two-time-scale SA asymptotically converges in distribution at a rate $\Ocal(1/k)$. On the other hand, the finite-sample complexity (or finite-time convergence rate) of this method in the mean-squared regime is rather incomplete. In the linear setting, i.e., when $F,G$ are linear on $x,y$, the mean-square error of the iterates generated by \eqref{alg:xy} converges to zero at a rate $\Ocal(1/k)$ \cite{Kaledin_two_time_SA2020,haque2023tight}. When $F,G$ are nonlinear, only a suboptimal convergence rate, $\Ocal(1/k^{2/3})$, is known \cite{Doan_two_time_SA2020}.  Motivated by the work in \cite{Doan_two_time_SA2020}, the authors in \cite{han2024finite} show that the two-time-scale SA in \eqref{alg:xy} converges at a rate $\Ocal(1/k)$, however, this result requires a restrictive condition on the high order of the smoothness of the operators (see the discussion in Remark \ref{remark:thm} for more detail). This leaves the question of whether the two-time-scale SA in \eqref{alg:xy} can find the solution $(x^{\star},y^{\star})$ of \eqref{prob:FG} with an $\Ocal(1/k)$ finite-time sample complexity. The overarching goal of this paper is to (partially) answer this question, where our main contribution is:\vspace{0.1cm}

\noindent\textbf{Main Contributions.} \textit{This paper proposes to develop a new variant of the two-time-scale SA in \eqref{alg:xy}, leveraging the classic Ruppert-Polyak averaging technique. The mean square errors of the iterates generated by the proposed methods are shown to converge to zero at a rate $\Ocal(1/k)$, which significantly improves the best known result $\Ocal(1/k^{2/3})$ in the setting of strongly monotone and Lipschitz continuous nonlinear operators. We will also leverage the proposed method to develop new reinforcement learning algorithms with improved performance.} 


\subsection{Prior Works}
\noindent\textbf{Stochastic Approximation (SA).} The two-time-scale SA in \eqref{alg:xy} is a generalized variant of the classic SA introduced by \cite{RobbinsM1951} with broad applications in various areas including statistics, stochastic optimization, machine learning, and reinforcement learning \cite{BTbook1999,borkar2008,TTFBook2009,SBbook2018,LanBook2020}. In general, SA is a simulation-based approach for finding the root (or fixed point) of some unknown operator $F$ represented by the form of an expectation, i.e., $F(x) = \Eset_{\pi}[F(x,\xi)]$, where $\xi$ is some random variable with a distribution $\pi$. Specifically, this method seeks a point $x^{\star}$ such that $F(x^{\star}) = 0$ based on the noisy observations $F(x;\xi)$. The estimate $x$ of $x^{\star}$ is iteratively updated by moving along the direction of $F(x;\xi)$ scaled by some step size. Through a careful choice of this step size, the ``noise'' induced by the random samples $\xi$ can be averaged out across iterations, and the algorithm converges to $x^*$. Both asymptotic convergence and finite-sample complexity of SA are very well-studied under general conditions of the operators and noise models; see for example \cite{borkar2008,benveniste2012adaptive,LanBook2020, BottouCN2018,Bhandari2018_FiniteTD,Karimi_colt2019, SrikantY2019_FiniteTD,HuS2019,Chen_MC_LinearSA_2020,ChenZDMC2019}. One can apply SA to solve problem \eqref{prob:FG}, i.e., by considering $W = [F;G]$ as a single operator. This approach requires that $W$ has to satisfy a similar condition as used in the existing literature of SA. On the other hand, in this paper we consider a more general condition of $F,G$ that is not applicable to the existing results of SA (see the discussion in Remark 2 for more details).       

\noindent\textbf{Two-Time-Scale Stochastic Approximation} is proposed to find the solution $(x^{\star},y^{\star})$ of \eqref{prob:FG} under weaker conditions of the operators $F,G$ when SA is not applicable. Indeed, in many applications presented in Section \ref{sec:motivating_applications}, two-time-scale SA is observed to perform better than its single-time-scale counterpart, either more stable or converge faster. Unfortunately, this advantage comes at a price: the convergence analysis of the two-time-scale SA in \eqref{alg:xy}, especially its finite-sample complexity, is more challenging to study than SA. To date, the convergence of this method, both asymptotic and finite-time convergence rate, is only well-studied in the linear setting; see for example in \cite{KondaT2004, DalalTSM2018, DoanR2019,GuptaSY2019_twoscale,Doan_two_time_SA2019,Kaledin_two_time_SA2020,Dalal_Szorenyi_Thoppe_2020,haque2023tight}. In particular, the asymptotic rate of \eqref{alg:xy} is probably first studied in \cite{KondaT2004}, where the authors show an $\Ocal(1/k)$ convergence in distribution via central limit theorem. Following this work, several results on the convergence properties of \eqref{alg:xy} have been studied, e.g., convergence with high probability in \cite{Dalal_Szorenyi_Thoppe_2020}, and convergence in mean-square errors with constant step sizes in \cite{GuptaSY2019_twoscale} and time-varying step sizes in \cite{8919880,Doan_two_time_SA2019}. The works in \cite{8919880, Doan_two_time_SA2019} only achieve a rate $\Ocal(1/k^{2/3})$, which is considered as suboptimal. The convergence rate of \eqref{alg:xy} in the linear setting is later shown to be $\Ocal(1/k)$ in \cite{Kaledin_two_time_SA2020}, utilizing the setting in \cite{KondaT2004}. Recently, this result is further refined with tighter constants in the upper bounds of the convergence rates in \cite{haque2023tight}.

Unlike the linear setting, theoretical results on the convergence rates of \eqref{alg:xy} in the general nonlinear setting are less understood. In \cite{MokkademP2006}, the authors provide an asymptotic convergence rate of \eqref{alg:xy} in distribution via central limit theorem when the noise is i.i.d. This results is recently generalized to the Markov setting, i.e., noise is dependent and samples are biased, in \cite{hu2024central}. On the other hand, the first finite-time complexity of \eqref{alg:xy} in the nonlinear setting is studied in \cite{Doan_two_time_SA2020, zeng2021two}, where the author shows a rate of $\Ocal(1/k^{2/3})$ in the mean-squared regime. This result is shown by leveraging the singular perturbation technique in control theory \cite{Kokotovic_SP1999} that is first proposed to study the finite-time complexity of linear two-time-scale SA in \cite{GuptaSY2019_twoscale}. This result is recently refined in \cite{han2024finite}, where the authors obtain an $\Ocal(1/k)$ convergence rate. However, the analysis in \cite{han2024finite} requires a rather restrictive condition on the high order of the smoothness of the operators, which may not hold for any nonlinear operator (see the discussion in Remark \ref{remark:thm}). The focus of this work is to develop a new variant of two-time-scale SA that gives an $\Ocal(1/k)$ convergence rate without using an assumption on the high order of the smoothness of the operators.



\subsection{Motivating Applications}\label{sec:motivating_applications}
In this section, we present a few existing applications of two-time-scale SA in \eqref{alg:xy}, which serve as the main motivation of this paper. Our improved convergence rate in Theorem \ref{thm:rate} below suggests that one can apply the development in this paper to develop new algorithms for solving problems in these applications.

\subsubsection{Reinforcement Learning.} Two-time-scale SA has been used extensively to model  reinforcement learning methods, for example, gradient temporal difference (TD) learning  and actor-critic methods \cite{Sutton2009a,Sutton2009b,Maeietal2009,KondaT2003,xu_actor_critic2020,wu_actor_critic2020,Hong_actor_critic2020,Khodadadian_actorcritic_2021,9992837}. For example, the gradient TD learning for solving the policy evaluation problem under nonlinear function approximations studied in \cite{Maeietal2009} can be viewed as a variant of \eqref{alg:xy}. In this problem, we want to estimate the cumulative rewards $V$ of a stationary policy using function approximations $V_{y}$, that is, our goal is to find $y$ so that $V_{y}$ is as close as possible to the true value $V$. Here, $V_{y}$ can be represented by a neural network where $y$ is the weight vector of the network. Let $\zeta$ be the environmental sate, $\gamma$ be the discount factor, $\phi(\zeta) = \nabla V_{y}(\zeta)$ be the feature vector of state $\zeta$, and $r$ be the reward return by the environment. Given a sequence of samples $\{\zeta_{k},r_{k}\}$, one version of  {GTD} are given as
\begin{align*}
x_{k+1} &= x_{k} + \alpha_{k}( \delta_{k} - \phi(\zeta_{k})^{T}x_{k})\phi(\zeta_{k})\\
y_{k+1} &= y_{k} + \beta_{k}\Big[\left(\phi(\zeta_{k})-\gamma\phi(\zeta_{k+1})\right)\phi(\zeta_{k})^Tx_{k} - h _{k}\Big],
\end{align*}
where $\delta_{k}$ and $h_{k}$ are defined as  
\begin{align*}
\delta_{k} &= r_{k} + \gamma V_{y_{k}}(\zeta_{k+1}) - V_{y_{k}}(\zeta_{k})\\ 
h_{k} &= (\delta_{k} - \phi(\zeta_{k})^Tx_{k})\nabla^2V_{y_{k}}(\zeta_{k})x_{k},    
\end{align*}
which is clearly a variant of \eqref{alg:xy} under some proper choice of $F$ and $G$. It has been observed that the {GTD} method is more stable and performs better when compared to the single-time-scale counterpart (TD learning) under off-policy learning and nonlinear function approximations \cite{SBbook2018}. 

Another reinforcement learning method, namely, the online actor-critic algorithm \cite{KondaT2003}, to find the underlying optimal policy can also be viewed as a variant of \eqref{alg:xy}. In this algorithm, there is a critic used to estimate the state-action value function of the current policy. The estimate of the critic is then used by an actor to update the current policy toward the optimal policy. It is known that the update of the critic is implemented at a faster time scale (e.g., using a bigger step size) than the update of the actor (e.g., using a smaller step size), resulting a two-time-scale learning algorithm.         

While the existing gradient TD learning and online actor-critic methods can be formulated as different variants of \eqref{alg:xy}, the proposed method in this paper suggests that it will be better to use the Ruppert-Polyak averaging steps on the samples to reduce the impacts of noise before updating the main iterates in these two methods. In Section \ref{sec:simulations} we provide numerical experiments to illustrate this observation.

\subsubsection{Game Theory.} Two-time-scale SA can also be used to represent different learning methods in game theory, in particular, in zero-sum settings. For example, consider a two-player Markov game characterized by $\Mcal=(\Scal,\Acal,\Bcal,\Pcal,\gamma,r)$. Here, $\Scal$ is the finite state space, $\Acal$ and $\Bcal$ are the finite action spaces of the two players, $\gamma\in(0,1)$ is the discount factor, and $r:\Scal\times\Acal\times\Bcal\rightarrow[0,1]$ is the reward function. Let $\Delta_{\Fcal}$ denote the probability simplex over a set $\Fcal$, and $\Pcal:\Scal\times\Acal\times\Bcal\rightarrow\Delta_{\Scal}$ be the transition probability kernel, with $\Pcal(s'\mid s,a,b)$ specifying the probability of the game transitioning from state $s$ to $s'$ when the first player selects action $a\in\Acal$ and the second player selects $b\in\Bcal$.
The policies of the two players are denoted by $\pi\in\Delta_{\Acal}^{\Scal}$ and $\phi\in\Delta_{\Bcal}^{\Scal}$, with $\pi(a\mid s)$, $\phi(b\mid s)$ denoting the probability of selecting action $a$, $b$ in state $s$ according to $\pi$, $\phi$.
Given a policy pair $(\pi,\phi)$, we measure its performance in state $s\in\Scal$ by the value function
\begin{align*}
    V^{\pi, \phi}(s)=\mathbb{E}_{\{a_k,b_k,s_{k+1}\}}\Big[\sum\nolimits_{k=0}^{\infty} \gamma^k r\left(s_k, a_k, b_k\right) \mid s_0=s\Big].
\end{align*}
Under a fixed initial distribution $\rho\in\Delta_{\Scal}$, we define by $J(\pi, \phi)\triangleq\mathbb{E}_{s_0\sim\rho}[V^{\pi, \phi}(s_0)]$ the discounted cumulative reward under $(\pi,\phi)$. The goal is to find a Nash equilibrium $(\pi^{\star},\phi^{\star})$, which can be shown to be the solution of the following minimax optimization problem
\begin{align*}
\max_{\pi\in\Delta_{\Acal}^{\Scal}}\min_{\phi\in\Delta_{\Bcal}^{\Scal}}J(\pi, \phi)=\min_{\phi\in\Delta_{\Bcal}^{\Scal}}\max_{\pi\in\Delta_{\Acal}^{\Scal}}J(\pi, \phi)=J(\pi^{\star},\phi^{\star}).
\end{align*}
When the transition probability kernel $\Pcal$ is known, one can apply dynamic programming approach to solve this problem. On the other hand, when $\Pcal$ is unknown one can use reinforcement learning methods to find the Nash equilibrium. For example, we can apply the decentralized Q-learning method recently studied in \cite{sayin2021decentralized}, which is a variant of two-time-scale SA. This method maintains two variables, namely, local value functions and local q-function for each player, that are updated at different time scales by using different step sizes. In particular, let $v_{1}^{\pi}(s), q_{1}^{\pi}(s,a)$ be the local value function and state-action value function at Player $1$, respectively. Similar notation is defined for Player $2$. Player $1$ will update its local functions as (similar to Player $2$)
\begin{align*}
q_{i}^{k+1}(s_{k},a_{k}) &= q_{i}^{k+1}(s_{k},a_{k}) + \alpha_{k}\left(r(s_{k},a_{k},b_{k}) + \gamma v_{i}^{k}(s_{k}') - q_{i}^{k+1}(s_{k},a_{k})\right),\\
v_{i}^{k+1}(s_{k}) &= v_{i}^{k}(s_{k}) + \beta_{k}\big(\big[\bar{\pi}_{i}^{k}\big]^T q_{i}^{k+1}(s_{k},\cdot) - v_{i}^{k},(s_{k})\big),
\end{align*}
where $\beta_{k} \leq \alpha_{k}$ and $\bar{\pi}_{i}^{k}$ is the best response policy w.r.t $q_{i}^{k+1}$. While asymptotic convergence of this method has been studied in \cite{sayin2021decentralized} based on the ODE method, its finite-time complexity is completely unknown. Thus, an interesting question is to study whether one can apply the technique studied in this paper to derive the convergence rate of the decentralized Q-learning method. Another question is to see whether one can improve the existing convergence rates of policy gradient descent-ascent algorithm for zero-sum Markov games studied in \cite{zeng2022regularized}, where the authors show that this algorithm converges at a rate $\Ocal(1/k^{1/3})$. Although this result is not comparable to the one studied in Theorem \ref{thm:rate}, the objective function $J(\pi,\phi)$ is nonconvex and nonconcave, therefore, one cannot immediately apply our analysis to the zero-sum Markov game setting.

\subsubsection{Minimax Optimization.} We consider the following minimax optimization problems 
\begin{align*}
\min_{x\in\Rset^{m}}\max_{y\in\Rset^{n}}f(x,y),
\end{align*}
where $f:\Rset^{m}\times \Rset^{n}\rightarrow\Rset$ is a (non)convex function w.r.t $x$ for a fixed $y$ and (non)concave w.r.t $y$ for a fixed $x$. The minimax problem has broad applications in different areas including game theory \cite{Basarbook1998},   training generative adversarial networks (GANs) \cite{Goodfellow_GAN_2020,Mescheder2017}, adversarial and robust machine learning \cite{KurakinGB17,Qian_Zhu_Tang_Jin_Sun_Li_2019}, resource allocation over networks \cite{6450111}, and distributed optimization \cite{Lan2020_DecentralizedOpt,9085431}; to name just a few. For solving this problem, (alternating) gradient descent-ascent is the most popular method used in the existing literature; see for example  \cite{lin20a,9070155,Yang_NEURIPS2020,Xu2020AUS,Zhang_NEURIPS2020,doan2022convergence,Cherukuri_2017} and the references therein. This method iteratively updates the estimates $x_{k},y_{k}$ of the desired solution as  
\begin{align}
\begin{aligned}
    x_{k+1} &= x_{k} - \alpha_{k}\nabla_{x} f (x_{k},y_{k};\xi_{k}),\\ 
    y_{k+1} &= y_{k} - \beta_{k}\nabla_{y} f (x_{k},y_{k};\psi_{k}),
\end{aligned}
\end{align}
where $\alpha_{k},\beta_{k}$ are chosen differently to guarantee the convergence of this method. It is obvious that gradient descent ascent is a variant of the two-time-scale SA. 

\subsubsection{Other Applications.} Finally, two-time-scale SA has been used in distributed optimization to address the issues of communication constraints \cite{DoanBS2017,DoanMR2018b,8619539,9683681} and in distributed control to handle clustered networks \cite{Romeres13,JChow85,Biyik08,Boker16,pham2021distributed,dutta2022convergence,9992900}. For example, the best convergence rate of distributed consensus-based policy gradient methods under random quantization is $\Ocal(1/\sqrt{k})$ when the objective function is strongly convex \cite{9683681}, a variant of strong monotonicity in Assumption \ref{assump:sm}. Thus, it will be interesting to see whether we can apply the proposed technique in this paper to improve the convergence rate of distributed optimization method under random quantization.

\section{Fast Two-Time-Scale Stochastic Approximation}
The main difficulty in analyzing the convergence of the classic two-time-scale SA in \eqref{alg:xy} is to handle the coupling between the two iterates, i.e., the update of $x$ depends on $y$ and vice versa. A common approach to address this problem is to choose proper step sizes $\beta_{k}\ll \alpha_{k}$, e.g., $\beta_{k} \sim 1/k \leq \alpha_{k}\sim 1/k^{2/3}$. Then one can show that the update of $x$ is asymptotically independent on $y$ and vice versa. However, under the impact of stochastic noise due to the sampling of operators, such a choice of step sizes yields a suboptimal finite-time convergence rate of two-time-scale SA, e.g., $\Ocal(1/k^{2/3})$.  In this paper, we propose an alternative approach that will help to decouple the impact of noise on the update of the iterates. In particular, we consider the following variant of two-time-scale SA in
\eqref{alg:xy}
\begin{align}
&\left\{
\begin{array}{l}
f_{k+1} = (1-\lambda_{k})f_{k} + \lambda_{k} (F(x_{k},y_{k}) + \xi_{k})\\
x_{k+1} = x_{k} - \alpha_{k}f_{k},
\end{array}
\right.\label{alg:x-accelerated}\\
&\hspace{-1.1cm} \text{and}\notag\\ 
&\left\{
\begin{array}{l}
g_{k+1} = (1-\gamma_{k})g_{k} + \gamma_{k} (G(x_{k},y_{k}) + \psi_{k})\\
y_{k+1} = y_{k} - \beta_{k}g_{k}.
\end{array}
\right.\label{alg:y-accelerated}
\end{align}
Here, $f_{k}$ and $g_{k}$ are used to estimate the time-weighted average of the operators by using the two nonnegative step sizes $\gamma_{k}$ and $\lambda_{k}$. These estimates are then used to update the iterates $x,y$. When $\lambda_{k} = \gamma_{k} = 1$, the  updates in  \eqref{alg:x-accelerated} and \eqref{alg:y-accelerated} reduce to the ones in \eqref{alg:xy}. On the other hand, when $\lambda_{k},\gamma_{k} < 1$ one can view the updates of $f_{k}$ and $g_{k}$ as a variant of the classic Ruppert-Polyak averaging technique \cite{ruppert1988efficient, polyak1992acceleration}. Ruppert-Polyak averaging technique is often applied to estimate the time-weighted average values of the iterates in optimization and SA literature. The average variable often achieves a better performance than the one without using averaging; see for example \cite{LanBook2020,haque2023tight,chen2021accelerating} and the references therein. In the proposed method in  \eqref{alg:x-accelerated} and \eqref{alg:y-accelerated}, we utilize the Ruppert-Polyak averaging technique on the operators, which will help to decompose the coupling between sampling noise and the iterate updates. Indeed, we will show that by choosing $\lambda_{k},\gamma_{k} < 1$, one can separate the impact of the noise on the updates of the iterates, therefore, achieving an optimal rate $\Ocal(1/k)$ under a proper choice of step sizes $\alpha_{k},\beta_{k}$.  

\begin{remark}
In our approach, we use the Ruppert-Polyak averaging steps on the samples of the operators, which is different to the existing works where these steps are often applied to the main iterates. The benefit of using this averaging technique to the iterates has been observed in the literature. On the other hand, we are not aware of any prior work that uses this technique to improve the estimate of the operators in parallel with the iterate updates. An interesting question is to study the performance of these two types of averaging steps when they are implemented in parallel.         
\end{remark}


\subsection{Notation and Main Assumptions}\label{sec:assumptions}
In this section, we present the main assumptions which we will be used to study the finite-time convergence rate of \eqref{alg:x-accelerated} and \eqref{alg:y-accelerated}. These assumptions are fairly standard and have been used in prior works under different variants \cite{KondaT2004, DalalTSM2018, DoanR2019,GuptaSY2019_twoscale,Doan_two_time_SA2019,Kaledin_two_time_SA2020,MokkademP2006}.  Our first assumption is on the smoothness of the operators.
\begin{assump}\label{assump:smooth}
Given $y\in\Rset^{d}$ there exists an operator $H:\Rset^{d}\rightarrow\Rset^{d}$ such that $x = H(y)$ satisfies
\begin{align*}
F(H(y),y) = 0.    
\end{align*}
In addition, we assume that  the operators $F,G,H$ are Lipschitz continuous with a positive constant $L$, i.e., the following conditions satisfy for all $x_{1}, x_{2}, y_{1}, y_{2} \in\Rset^{d}$
\begin{align}
\begin{aligned}
\hspace{-0.2cm}\|H(y_{1}) - H(y_{2})\| &\leq L\|y_{1}-y_{2}\|,\\
\hspace{-0.2cm}\|F(x_{1},y_1) - F(x_{2},y_2)\| &\leq L(\|x_{1}-x_{2}\| + \|y_{1} - y_{2}\|),\\
\hspace{-0.2cm}\|G(x_{1},y_{1}) - G(x_{2},y_{2})\| &\leq L\left(\|x_{1} - x_{2}\| + \|y_{1} - y_{2}\| \right).   
\end{aligned}
\label{assump:smooth:ineq}
\end{align}
\end{assump}
\noindent Second, we will study the setting when $F,G$ are strongly monotone, which implies that $(x^{\star},y^{\star})$ is unique.
\begin{assump}\label{assump:sm}
Given $y$, $F$ is strongly monotone w.r.t $x$, i.e., there exists a constant $\mu_{F} > 0$ such that for all $x,z\in\Rset^{d}$
\begin{align}
\left\langle x - z, F(x,y) - F(z,y) \right\rangle \geq \mu_{F} \|x-z\|^2. \label{assump:sm:F:ineq}  
\end{align}
Moreover, $G$ is $1$-point strongly monotone w.r.t $y^{\star}$, i.e., there exists a constant $\mu_{G} \in (0,\mu_{F}]$ such that for all $y\in\Rset^{d}$
\begin{align}
\left\langle y - y^{\star}, G(H(y),y) \right\rangle \geq \mu_{G} \|y - y^{\star}\|^2. \label{assump:sm:G:ineq}
\end{align}
\end{assump}

\begin{remark}\label{remark:assumption}
First, Assumption \ref{assump:smooth} is often referred to as the global Lipschitz continuity. Our analysis in this paper can be easily extended to the local Lipschitz condition as studied in \cite{hu2024central} (with extra constant terms in our theoretical bounds), i.e., $H$ (similarly, $F,G$) satisfies
\begin{align*}
\|H(y)\| \leq L(1 + \|y\|)\quad \forall y\in\Rset^{d}.     
\end{align*}
Second, it is also worth to point out that in general Assumption \ref{assump:sm} does not imply that the operator $W = [F;G]$ is strongly monotone. Indeed, if $W$ is strongly monotone, one can apply the single-time-scale SA to solve \eqref{prob:FG}, e.g., by considering SA to update $z = [x,y]$. This statement can be verified in the linear setting, where  
$F$ and $G$ are linear operators 
\begin{align*}
F(x,y) &= \Abf_{11}x + \Abf_{12}y - b_{1}\\
G(x,y) &= \Abf_{12}x + \Abf_{22}y - b_{2}.
\end{align*}
In this case, Assumption \ref{assump:sm} implies that $\Abf_{11}$ and $\Delta = \Abf_{22}-\Abf_{21}\Abf_{11}^{-1}\Abf_{12}$ are negative definite (but not necessarily symmetric), and $H(y) = \Abf_{11}^{-1}(b_{1} - \Abf_{12}y)$. It is straightforward to see that $(x^*,y^*)$ is unique and satisfies
\begin{align*}
x^* &= \Abf_{11}^{-1}(b_{1}-\Abf_{12}y^*),\\
y^* &= \Delta^{-1}\left(b_2 - \Abf_{21}\Abf_{11}^{-1}b_{1}\right).
\end{align*} 
However, the matrix $\Abf$ defined below is not negative definite
\begin{align*}
\Abf = \left[\begin{array}{cc}
\Abf_{11}     &  \Abf_{12}\\
\Abf_{12}     &  \Abf_{22}
\end{array} \right].
\end{align*}
To see this, we consider the following example 
\begin{align*}
\Abf_{11} &= \left[\begin{array}{cc}
-3.5     &  -8.5\\
1     & 0 
\end{array}\right],\quad \Abf_{12} = \left[\begin{array}{cc}
2     &  0\\
0     & 2 
\end{array}\right],\\
\Abf_{21} &= \left[\begin{array}{cc}
5     &  0\\
0     & 5 
\end{array}\right],\quad \Abf_{22} = \left[\begin{array}{cc}
-10     &  -1.6\\
20     & -1 
\end{array}\right],
\end{align*}
Then the eigenvalues of $\Abf_{11}$, $\Delta$, and $\Abf$ are
\begin{align*}
\text{eig}(\Abf_{11}) &=
-1.75 \pm 2.33i
,\quad \text{eig}(\Delta) = 
-3.44 \pm 14.24i\\
\text{eig}(\Abf) &= \{-8.53 \pm 4.51i,\;
1.28 \pm 4.24i\},
\end{align*}
which shows that $\Abf_{11}$ and $\Delta$ are negative but $\Abf$ is not.    
\end{remark}
Finally, we consider i.i.d noise models, that is, $\xi_{k}$ and $\psi_{k}$ are Martingale difference. We denote by $\Qcal_{k}$ the filtration containing all the history generated by \eqref{alg:xy} up to time $k$, i.e.,
\begin{equation*}
\Qcal_{k} = \{x_{0},y_{0},\xi_{0},\psi_{0},\xi_{1},\psi_{1},\ldots,\xi_{k},\psi_{k}\}.
\end{equation*}
\begin{assump}\label{assump:noise}
The random variables $\xi_{k}$ and $\psi_{k}$, for all $k\geq0$, are independent of each other and across time, with zero mean and common variances given as follows
\begin{equation*}
\Eset[\xi_{k}^T\xi_{k}\,|\,\Qcal_{k-1}] = \Gamma_{11},\quad
\Eset[\psi_{k}^T\psi_{k}\,|\,\Qcal_{k-1}] = \Gamma_{22}.
\end{equation*}
\end{assump}
\begin{remark}
    We note that our analysis in this paper can be extended to cover the Markov setting where $\xi_{k}$ and $\psi_{k}$ are generated by Markov processes. In this case, $\xi_{k}$ and $\psi_{k}$ are dependent and the samples are biased. In this setting, if the Markov processes are ergodic, i.e., they have geometric mixing time, one can apply the technique proposed in \cite{SrikantY2019_FiniteTD} to handle the dependent and biased samples. This will result in an extra factor $\Ocal(\log(k))$ in our theoretic bounds, where $k$ is the number of iterations.   
\end{remark}


\section{Main Results}\label{sec:results}
In this section, we present in details the main results of this paper, that is, we provide a finite-time analysis for the convergence rates of the two-time-scale stochastic approximation in \eqref{alg:x-accelerated} and \eqref{alg:y-accelerated}. Under assumptions presented in Section \ref{sec:assumptions}, we show that the mean square errors of these iterates, $\Eset[\|x_{k}-x^*\|^2]$ and $\Eset[\|y_{k}-x^*\|^2]$, converge to zero at a rate $\Ocal(1/k)$. To show this result, we introduce the following four residual variables, 
\begin{align}
\begin{aligned}
\Delta f_{k} = f_{k} - F(x_{k},y_{k}) \quad \text{and}\quad\xhat_{k} &= x_{k} - H(y_{k}),\\   
\Delta g_{k} = g_{k} - G(x_{k},y_{k}) \quad \text{and}\quad \yhat_{k} &= y_{k} - y^{\star}.
\end{aligned}    \label{alg:xyhat}
\end{align}
One can rewrite the updates of $x,y$ in  \eqref{alg:x-accelerated} and \eqref{alg:y-accelerated} as
\begin{align*}
x_{k+1} &= x_{k} - \alpha_{k}F(x_{k},y_{k}) - \alpha_{k}\Delta f_{k}\\ 
y_{k+1} &= y_k - \beta_{k}G(x_{k},y_{k}) - \beta_{k}\Delta g_{k},     
\end{align*}
If $(\Delta f_{k},\Delta g_{k})$ goes to zero then the impact of sampling noise on $x,y$ will be negligible. In addition, if  $(\xhat_{k},\yhat_{k})$ go to zero then $(x_{k},y_{k})\rightarrow(x^{\star},y^{\star})$. Thus, to establish the convergence of $(x_{k},y_{k})$ to $(x^{\star},y^{\star})$ one can instead study the convergence of $(\Delta f_{k},\Delta g_{k})$ and $(\xhat_{k},\yhat_{k})$ to zero. Indeed, our main result presented in Theorem \ref{thm:rate} below is to study an upper bound for the convergence rate of these quantities. We will do this by considering the following choice of step sizes
\begin{align}
\begin{aligned}
\lambda_{k} &= \frac{C_{\lambda}}{(k+h+1)},\quad \gamma_{k} = \frac{C_{\gamma}}{(k+h+1)}\\ 
\alpha_{k} &= \frac{C_{\alpha}}{k+h+1},\quad \beta_{k} = \frac{C_{\beta}}{k+h+1}, 
\end{aligned}\label{step-sizes}
\end{align}
where $C_{\lambda},C_{\gamma},C_{\alpha}, C_{\beta}, h$ are some positive constants, which will be specified in Theorem \ref{thm:rate}. We will consider the setting when $\beta_{k}\leq \alpha_{k}$, i.e., the iterate $x_{k}$ is implemented at a faster time scale than $y_{k}$. To the end of this paper, we will assume that Eq. \eqref{step-sizes} always holds.


\subsection{Preliminaries}\label{sec:preliminaries}
We first present some preliminary results, which will help to facilitate the analysis of Theorem \ref{thm:rate}. In particular, the following lemmas are to provide some important bounds for the size of the variables generated by the updates in \eqref{alg:x-accelerated} and \eqref{alg:y-accelerated}. For an ease of exposition, we present the analysis of these lemmas in the appendix.   

\begin{lem}\label{lem:inequalities}
We have
\begin{align}
\begin{aligned}
\hspace{-0.2cm}\|g_{k}\| &\leq \|\Delta g_{k}\| + L\|\xhat_{k}\| + L(1+L)\|\yhat_{k}\|,\\
\hspace{-0.2cm}\|f_{k}\| &\leq \|\Delta f_{k}\| + L\|\xhat_{k}\|,\\
\hspace{-0.2cm}\|x_{k+1}-x_{k}\| &\leq \alpha_{k}(\|\Delta f_{k}\| + L\|\xhat_{k}\|),\\ 
\hspace{-0.2cm}\|y_{k+1}-y_{k}\| &\leq  \beta_{k} (\|\Delta g_{k}\| + L\|\xhat_{k}\| + L(1+L)\|\yhat_{k}\|).
\end{aligned}
\label{lem:inequalities:Ineq} 
\end{align}
\end{lem}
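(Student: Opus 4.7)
The proof is a direct algebraic exercise: each of the four inequalities reduces to a combination of the triangle inequality, the defining relations of the residual variables in \eqref{alg:xyhat}, and the Lipschitz bounds in Assumption \ref{assump:smooth}. The plan is to first handle the two bounds on the auxiliary vectors $f_k$ and $g_k$, and then the bounds on the increments $x_{k+1}-x_k$ and $y_{k+1}-y_k$ follow immediately from the update rules in \eqref{alg:x-accelerated} and \eqref{alg:y-accelerated}.

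For the bound on $\|f_k\|$, I would write $f_k = \Delta f_k + F(x_k,y_k)$ and then use the key fact that $F(H(y_k),y_k)=0$ (from Assumption \ref{assump:smooth}) to substitute $F(x_k,y_k) = F(x_k,y_k) - F(H(y_k),y_k)$. The triangle inequality combined with the Lipschitz continuity of $F$ then gives
\begin{align*}
\|f_k\| \leq \|\Delta f_k\| + \|F(x_k,y_k) - F(H(y_k),y_k)\| \leq \|\Delta f_k\| + L\|x_k - H(y_k)\|,
\end{align*}
which is the claimed bound since $\xhat_k = x_k - H(y_k)$.

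For the bound on $\|g_k\|$, the same idea applies but requires one additional step because $G(H(y),y)$ is not zero in general — only $G(x^\star,y^\star) = G(H(y^\star),y^\star) = 0$. I would write $g_k = \Delta g_k + G(x_k,y_k) - G(H(y^\star),y^\star)$ and then use the Lipschitz continuity of $G$ together with the splitting
\begin{align*}
\|x_k - H(y^\star)\| \leq \|x_k - H(y_k)\| + \|H(y_k)-H(y^\star)\| \leq \|\xhat_k\| + L\|\yhat_k\|.
\end{align*}
Collecting terms yields $\|g_k\| \leq \|\Delta g_k\| + L\|\xhat_k\| + L(1+L)\|\yhat_k\|$, which is the claimed inequality. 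The small trick here — using $y^\star$ rather than $y_k$ as the anchor point at which $G$ vanishes — is the only place in the proof where there is anything to think about; everything else is mechanical.

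Finally, the bounds on the increments are immediate: the updates give $x_{k+1}-x_k = -\alpha_k f_k$ and $y_{k+1}-y_k = -\beta_k g_k$, so taking norms and substituting the two bounds derived above produces the last two inequalities of \eqref{lem:inequalities:Ineq}. I do not expect any real obstacle in this lemma; it is a preparatory bookkeeping statement whose role is to convert the raw quantities $f_k, g_k, x_{k+1}-x_k, y_{k+1}-y_k$ into expressions in terms of the four residuals $\Delta f_k, \Delta g_k, \xhat_k, \yhat_k$ on which the rest of the analysis will be carried out.
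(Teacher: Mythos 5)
Your proposal is correct and follows essentially the same route as the paper: both arguments anchor the bound for $g_k$ at the point $(H(y^{\star}),y^{\star})$ where $G$ vanishes (the paper inserts the intermediate term $G(H(y_k),y_k)$ while you insert the intermediate point $H(y_k)$ inside the Lipschitz bound, yielding the identical constant $L(1+L)$), and the remaining three inequalities are handled exactly as in the paper.
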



\begin{lem}\label{lem:Delta-f}
Let $\lambda_{k}$ be chosen as $\lambda_{k}\leq 1/4$.
Then we have
\begin{align}
\|\Delta f_{k+1}\|^2 &\leq  (1-\lambda_{k})\|\Delta f_{k}\|^2  + 2\lambda_{k}^2\|\epsilon_{k}\|^2 - \Big(\frac{1}{4} - \frac{8L^2\alpha_{k}^2}{\lambda_{k}^2} \Big)\lambda_{k} \|\Delta f_{k}\|^2+ \frac{20L^4\alpha_{k}^2}{\lambda_{k}}\|\xhat_{k}\|^2 \notag\\
&\quad   + \frac{12L^4(1+L)^2\beta_{k}^2}{\lambda_{k}}\|\yhat_{k}\|^2  + 2(1-\lambda_{k})\lambda_{k}\epsilon_{k}^T\Delta f_{k} + \frac{12L^2\beta_{k}^2}{\lambda_{k}}\|\Delta g_{k}\|^2
.\label{lem:Delta-f:ineq}
\end{align}
\end{lem}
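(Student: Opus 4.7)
The natural starting point is to derive a one-step recursion for $\Delta f_{k+1}$. Subtracting $F(x_{k+1},y_{k+1})$ from both sides of the averaging update $f_{k+1} = (1-\lambda_k)f_k + \lambda_k(F(x_k,y_k)+\epsilon_k)$ and inserting $\pm\, F(x_k,y_k)$, I obtain the identity
\begin{align*}
\Delta f_{k+1} = (1-\lambda_k)\Delta f_k \;+\; B_k \;+\; \lambda_k\epsilon_k,
\qquad B_k := F(x_k,y_k)-F(x_{k+1},y_{k+1}).
\end{align*}
Squaring and expanding gives six terms: $(1-\lambda_k)^2\|\Delta f_k\|^2$, $\|B_k\|^2$, $\lambda_k^2\|\epsilon_k\|^2$, and the three cross terms $2(1-\lambda_k)\langle\Delta f_k,B_k\rangle$, $2\lambda_k(1-\lambda_k)\epsilon_k^T\Delta f_k$, and $2\lambda_k\langle B_k,\epsilon_k\rangle$. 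The cross term involving $\Delta f_k$ and $\epsilon_k$ is retained verbatim (since it matches the one in the conclusion and will vanish in expectation later), while the other two cross terms must be absorbed by Young's inequality.

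The core of the calculation is to tune Young's parameter on the deterministic cross term so that the $\|\Delta f_k\|^2$ coefficient contracts properly. Specifically, I would apply $2ab\leq \eta a^2+b^2/\eta$ with $\eta=3\lambda_k/4$ to $2(1-\lambda_k)\langle\Delta f_k,B_k\rangle$, and the parameter-free Young $2\lambda_k\langle B_k,\epsilon_k\rangle\leq \lambda_k^2\|\epsilon_k\|^2+\|B_k\|^2$ to the noise cross term. A direct polynomial check shows
\begin{align*}
(1-\lambda_k)^2\Big(1+\tfrac{3\lambda_k}{4}\Big) \;=\; 1-\tfrac{5\lambda_k}{4} - \tfrac{\lambda_k^2}{2} + \tfrac{3\lambda_k^3}{4} \;\leq\; 1-\tfrac{5\lambda_k}{4},
\end{align*}
so the $\|\Delta f_k\|^2$ coefficient becomes $1-5\lambda_k/4 = (1-\lambda_k)-\lambda_k/4$, accounting for the $(1-\lambda_k)\|\Delta f_k\|^2$ baseline and the $-\lambda_k/4\,\|\Delta f_k\|^2$ ``overshoot'' that appears in the stated bound. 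The combined coefficient on $\|B_k\|^2$ becomes $2+\tfrac{4}{3\lambda_k}$, which under $\lambda_k\leq 1/4 \leq 1/3$ is dominated by $2/\lambda_k$.

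Finally, I would bound $\|B_k\|^2$ via the Lipschitz continuity of $F$ from Assumption 1, writing $\|B_k\|^2\leq 2L^2\|x_{k+1}-x_k\|^2+2L^2\|y_{k+1}-y_k\|^2$, and apply Lemma 1 to substitute the step-size-scaled bounds in terms of $\|\Delta f_k\|,\|\Delta g_k\|,\|\xhat_k\|,\|\yhat_k\|$. Expanding $(a+b)^2\leq 2a^2+2b^2$ and $(a+b+c)^2\leq 3a^2+3b^2+3c^2$, and using $\beta_k\leq \alpha_k$ to consolidate $(4L^4\alpha_k^2+6L^4\beta_k^2)\leq 10L^4\alpha_k^2$, the factor $2/\lambda_k$ then produces exactly the coefficients $8L^2\alpha_k^2/\lambda_k$, $20L^4\alpha_k^2/\lambda_k$, $12L^2\beta_k^2/\lambda_k$, and $12L^4(1+L)^2\beta_k^2/\lambda_k$ claimed. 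The only subtle step is the tuning of Young's parameter to $3\lambda_k/4$: a generic choice such as $\eta=\lambda_k$ yields the weaker $(1-\lambda_k)$ contraction but does not produce the negative $-(\lambda_k/4)\|\Delta f_k\|^2$ slack, which is precisely what Lemma 2 needs to handle the cross-iterate perturbation $8L^2\alpha_k^2/\lambda_k^2$ in the later aggregate Lyapunov analysis.
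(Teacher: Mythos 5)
Your proposal is correct and follows essentially the same route as the paper: the identity $\Delta f_{k+1}=(1-\lambda_k)\Delta f_k+B_k+\lambda_k\epsilon_k$, retention of the $\epsilon_k^T\Delta f_k$ cross term, Young's inequality on the remaining cross terms, and the Lipschitz/Lemma~\ref{lem:inequalities} bound on $\|B_k\|^2$ with $\beta_k\leq\alpha_k$ to consolidate the $\|\xhat_k\|^2$ coefficient. The only (immaterial) differences are your choice of Young parameter $3\lambda_k/4$ versus the paper's $\lambda_k/2$ — both yield the $-(\lambda_k/4)\|\Delta f_k\|^2$ slack under $\lambda_k\leq 1/4$ — and your bookkeeping of the $\|B_k\|^2$ coefficient as $2/\lambda_k$ against a factor-$2$ Lipschitz expansion rather than $4/\lambda_k$ against a factor-$1$ one, which lands on identical final constants.
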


\begin{lem}\label{lem:Delta-g}
Let $\gamma_{k}$ be chosen as $\gamma_{k} \leq 1/4$. 
Then we have
\begin{align}
\|\Delta g_{k+1}\|^2
&\leq (1-\gamma_{k})\|\Delta g_{k}\|^2 - \Big(\frac{1}{4} - \frac{12L^2\beta_{k}^2}{\gamma_{k}^2}\Big)\gamma_{k} \|\Delta g_{k}\|^2 + 2\gamma_{k}^2\|\psi_{k}\|^2 + 2(1-\gamma_{k})\gamma_{k}\psi_{k}^T\Delta g_{k}  \notag\\
&\quad  + \frac{8L^2\alpha_{k}^2}{\gamma_{k}}\|\Delta f_{k}\|^2    + \frac{20L^4\alpha_{k}^2}{\gamma_{k}}\|\xhat_{k}\|^2  + \frac{12L^4(1+L)^2\beta_{k}^2}{\gamma_{k}}\|\yhat_{k}\|^2.\label{lem:Delta-g:ineq}
\end{align}
\end{lem}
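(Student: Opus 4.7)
The plan is to produce a one-step inequality for $\|\Delta g_{k+1}\|^2$ that exhibits a $(1-\gamma_k)$ contraction on $\|\Delta g_k\|^2$, together with controlled residuals in $\|\Delta f_k\|^2$, $\|\xhat_k\|^2$, $\|\yhat_k\|^2$ and $\|\psi_k\|^2$. The starting point is a direct algebraic identity obtained from $\Delta g_k = g_k - G(x_k,y_k)$ and the update rule for $g_{k+1}$: inserting $\pm G(x_k,y_k)$ gives
\[
\Delta g_{k+1} = (1-\gamma_k)\Delta g_k + \gamma_k \psi_k + C_k, \quad C_k := G(x_k,y_k) - G(x_{k+1},y_{k+1}).
\]
Squaring produces the diagonal pieces $(1-\gamma_k)^2\|\Delta g_k\|^2 + \gamma_k^2\|\psi_k\|^2 + \|C_k\|^2$, the clean noise cross term $2(1-\gamma_k)\gamma_k\psi_k^T\Delta g_k$ which is preserved verbatim (and vanishes in expectation by Assumption~\ref{assump:noise}), and two further cross terms $2(1-\gamma_k)\Delta g_k^T C_k$ and $2\gamma_k\psi_k^T C_k$ to be handled with Young's inequality.

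The contraction $-\gamma_k/4\|\Delta g_k\|^2$ is carved out by combining two ingredients. First, Young's inequality with parameter $\lambda = \gamma_k/(2(1-\gamma_k)^2)$ gives $2(1-\gamma_k)\Delta g_k^T C_k \leq \tfrac{\gamma_k}{2}\|\Delta g_k\|^2 + \tfrac{2(1-\gamma_k)^2}{\gamma_k}\|C_k\|^2$. Second, under the hypothesis $\gamma_k\leq 1/4$ one has $\gamma_k^2\leq \gamma_k/4$, hence $(1-\gamma_k)^2\leq 1 - 7\gamma_k/4$, so the $\|\Delta g_k\|^2$ contributions sum to at most $(1-\gamma_k)\|\Delta g_k\|^2 - (\gamma_k/4)\|\Delta g_k\|^2$. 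The residual cross term $2\gamma_k\psi_k^T C_k$ is handled by the elementary $2ab\leq a^2+b^2$, contributing an extra $\gamma_k^2\|\psi_k\|^2$ (producing the stated $2\gamma_k^2\|\psi_k\|^2$) and a residual $\|C_k\|^2$ that merges with the other two $\|C_k\|^2$ pieces; using $\gamma_k\leq 1/4$, the aggregate coefficient on $\|C_k\|^2$ consolidates to $2/\gamma_k$.

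Finally, I would expand $\|C_k\|^2$ in the desired basis via the Lipschitz continuity of $G$ in Assumption~\ref{assump:smooth} and $(a+b)^2\leq 2(a^2+b^2)$, giving $\|C_k\|^2\leq 2L^2(\|x_{k+1}-x_k\|^2 + \|y_{k+1}-y_k\|^2)$. Substituting the increment bounds from Lemma~\ref{lem:inequalities} (together with $(a+b)^2\leq 2(a^2+b^2)$ for $x$ and $(a+b+c)^2\leq 3(a^2+b^2+c^2)$ for $y$) and using $\beta_k\leq \alpha_k$ to merge the two $\|\xhat_k\|^2$ contributions, the factor $2/\gamma_k$ propagates to produce the stated constants $8L^2\alpha_k^2/\gamma_k$, $20L^4\alpha_k^2/\gamma_k$, and $12L^4(1+L)^2\beta_k^2/\gamma_k$; the $\|\Delta g_k\|^2$ contribution from the $\beta_k^2$ piece of $\|C_k\|^2$ produces the $12L^2\beta_k^2/\gamma_k$ that is packaged inside the bracket $(1/4 - 12L^2\beta_k^2/\gamma_k^2)$. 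The principal obstacle is the bookkeeping: the Young parameter and the threshold $\gamma_k\leq 1/4$ must be tuned jointly so that the extracted $-\gamma_k/4$ buffer is large enough for the step-size regime in Theorem~\ref{thm:rate} to render the overall $\|\Delta g_k\|^2$ coefficient negative. The argument is otherwise structurally parallel to Lemma~\ref{lem:Delta-f}, with the roles $(F,\Delta f,\lambda,\alpha)\leftrightarrow(G,\Delta g,\gamma,\beta)$ swapped.
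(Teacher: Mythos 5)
Your proposal is correct and follows essentially the same route as the paper: the same decomposition $\Delta g_{k+1}=(1-\gamma_k)\Delta g_k+\gamma_k\psi_k+C_k$, the same Young's-inequality extraction of the $-\gamma_k/4$ contraction under $\gamma_k\le 1/4$, and the same Lipschitz-plus-Lemma~\ref{lem:inequalities} expansion of $\|C_k\|^2$ (with $\beta_k\le\alpha_k$ merging the $\|\xhat_k\|^2$ contributions). The only differences are cosmetic bookkeeping choices in how the cross terms are grouped and how the $\|C_k\|^2$ coefficient is consolidated, and your accounting lands on exactly the stated constants.
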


\begin{lem}\label{lem:xhat}
Let $\alpha_{k}$ be chosen as 
\begin{align}
\alpha_{k} \leq \frac{\mu_{F}}{L^2}\cdot    \label{lem:xhat:stepsizes}
\end{align}
Then we have
\begin{align}
\|\xhat_{k+1}\|^2 &\leq  \left(1 -\mu_{F}\alpha_{k}\right)\|\xhat_{k}\|^2 + L^2\big(4L^2 + 1\big)\alpha_{k}^2\|\xhat_{k}\|^2 - \left[\frac{\mu_{F}\alpha_{k}}{2} - \frac{4L^3\big(1+L(1+L)^2\big)\beta_{k}}{\mu_{G}}\right]\|\xhat_{k}\|^2\notag\\
&\quad + \frac{\mu_{G}\beta_{k}}{4}\|\yhat_{k}\|^2 + 4L^4(1+L)^{2} \alpha_{k}\beta_{k}\|\yhat_{k}\|^2 + \left[\frac{2\alpha_{k}}{\mu_{F}} + 4\alpha_{k}^2\right]\|\Delta f_{k}\|^2\notag\\ 
&\quad + \left[\beta_{k} + 4L^2\alpha_{k}\beta_{k} \right]\|\Delta g_{k}\|^2.\label{lem:xhat:Ineq}
\end{align}
\end{lem}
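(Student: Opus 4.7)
The plan is to pivot through $H(y_k)$ and write
\begin{align*}
\|\xhat_{k+1}\|^2 = \|x_{k+1} - H(y_k)\|^2 + 2\langle x_{k+1} - H(y_k),\, H(y_k) - H(y_{k+1})\rangle + \|H(y_k) - H(y_{k+1})\|^2,
\end{align*}
so that each of the three pieces carries one block of the target bound: the first produces the $(1-\mu_F\alpha_k)$ contraction together with the $(2\alpha_k/\mu_F)\|\Delta f_k\|^2$ penalty, the cross term produces the $\mu_G\beta_k/4\|\yhat_k\|^2$ reserve and the $\beta_k\|\Delta g_k\|^2$ contribution, and the last piece contributes a higher-order $\beta_k^2$ remainder that will be absorbed, via $\beta_k\le\alpha_k$, into the $\alpha_k^2$ and $\alpha_k\beta_k$ coefficients already present in the statement.

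For the first piece I would substitute $x_{k+1} - H(y_k) = \xhat_k - \alpha_k F(x_k, y_k) - \alpha_k \Delta f_k$, expand the square, and use $F(H(y_k),y_k)=0$ from Assumption \ref{assump:smooth} to rewrite $F(x_k,y_k) = F(x_k,y_k)-F(H(y_k),y_k)$; strong monotonicity of $F$ in $x$ (Assumption \ref{assump:sm}) and Lipschitz continuity then give $\|\xhat_k - \alpha_k F(x_k,y_k)\|^2 \le (1 - 2\mu_F\alpha_k + L^2\alpha_k^2)\|\xhat_k\|^2$, which under the step-size bound \eqref{lem:xhat:stepsizes} is at most $(1-\mu_F\alpha_k)\|\xhat_k\|^2$. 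The $-2\alpha_k\langle \xhat_k - \alpha_k F, \Delta f_k\rangle$ cross term is then handled by Young's inequality with weight $\mu_F/2$, producing the $\mu_F\alpha_k/2\,\|\xhat_k\|^2$ penalty and the $(2\alpha_k/\mu_F)\|\Delta f_k\|^2$ contribution. For the middle cross term I would apply Cauchy--Schwarz together with $\|H(y_k)-H(y_{k+1})\|\le L\|y_{k+1}-y_k\|$, substitute the bound $\|y_{k+1}-y_k\| \le \beta_k(\|\Delta g_k\| + L\|\xhat_k\| + L(1+L)\|\yhat_k\|)$ from Lemma \ref{lem:inequalities}, and the triangle bound $\|x_{k+1}-H(y_k)\|\le \|\xhat_k\|+\alpha_k\|\Delta f_k\|$. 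The resulting bilinear products are then split by Young's inequality, with the weight on the $\|\xhat_k\|\|\yhat_k\|$ product chosen so that the $\|\yhat_k\|^2$ coefficient lands at exactly $\mu_G\beta_k/4$ (thereby forcing the paired $\|\xhat_k\|^2$ coefficient to be $4L^4(1+L)^2\beta_k/\mu_G$), and the weight on the $\|\xhat_k\|\|\Delta g_k\|$ product chosen so that its $\|\Delta g_k\|^2$ coefficient lands at exactly $\beta_k$. Finally, the last piece $\|H(y_k)-H(y_{k+1})\|^2 \le 3L^2\beta_k^2(\|\Delta g_k\|^2 + L^2\|\xhat_k\|^2 + L^2(1+L)^2\|\yhat_k\|^2)$, together with the $\Delta f_k$-bearing cross products from the middle piece, absorb into the coarser $L^2(4L^2+1)\alpha_k^2\|\xhat_k\|^2$, $4L^4(1+L)^2\alpha_k\beta_k\|\yhat_k\|^2$, $4\alpha_k^2\|\Delta f_k\|^2$, and $4L^2\alpha_k\beta_k\|\Delta g_k\|^2$ terms, using only $\beta_k\le \alpha_k$ and \eqref{lem:xhat:stepsizes}.

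The hard part is not any single calculation but the bookkeeping of Young's weights. The coefficient $\mu_G\beta_k/4$ on $\|\yhat_k\|^2$ is not a free parameter: it is a reserve that the downstream Lyapunov analysis must respect, since the slow-scale $\yhat_k$-recursion supplies only a $\mu_G\beta_k$ contraction and the $\xhat$-inequality is allowed to consume at most a quarter of it. This pins the Young weight on the $\beta_k\|\xhat_k\|\|\yhat_k\|$ cross term, which in turn fixes the $O(\beta_k/\mu_G)$ cost on $\|\xhat_k\|^2$; analogously, the $(2\alpha_k/\mu_F)\|\Delta f_k\|^2$ target pins the Young weight on the $\alpha_k\langle \xhat_k-\alpha_k F,\Delta f_k\rangle$ term. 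The delicate verification is that once these two weights are fixed, every residual small contribution --- from the crude triangle bound on $\|x_{k+1}-H(y_k)\|$, from the mixed $\alpha_k\beta_k\|\Delta f_k\|\|\Delta g_k\|$ product, and from $\|H(y_k)-H(y_{k+1})\|^2$ --- does fit inside the coarser higher-order coefficients $L^2(4L^2+1)\alpha_k^2$, $4L^4(1+L)^2\alpha_k\beta_k$, $4\alpha_k^2$, and $4L^2\alpha_k\beta_k$ already written in the statement.
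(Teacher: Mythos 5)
Your argument is essentially the paper's own proof: the paper also writes $\xhat_{k+1}=\bigl(\xhat_{k}-\alpha_{k}F(x_{k},y_{k})-\alpha_{k}\Delta f_{k}\bigr)+\bigl(H(y_{k})-H(y_{k}-\beta_{k}g_{k})\bigr)$, bounds the pure term by strong monotonicity plus Lipschitz continuity, bounds the $H$-increment via Lemma \ref{lem:inequalities}, and tunes the Young weights exactly as you describe (so that the $\|\yhat_{k}\|^2$ coefficient lands on $\mu_{G}\beta_{k}/4$, the $\|\Delta f_{k}\|^2$ coefficient on $2\alpha_{k}/\mu_{F}$, and the higher-order remainders are absorbed using $\beta_{k}\leq\alpha_{k}$ and $\mu_{G}\leq L$). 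The one correction: do not coarsen $\|\xhat_{k}-\alpha_{k}F(x_{k},y_{k})\|^2\leq(1-2\mu_{F}\alpha_{k}+L^2\alpha_{k}^2)\|\xhat_{k}\|^2$ down to $(1-\mu_{F}\alpha_{k})\|\xhat_{k}\|^2$ before paying the Young penalty --- the target coefficient of $\|\xhat_{k}\|^2$ is $1-\tfrac{3\mu_{F}\alpha_{k}}{2}$ plus positive higher-order terms, so you need the full $-2\mu_{F}\alpha_{k}$ budget: allocate $-\mu_{F}\alpha_{k}$ to the leading factor, $-\mu_{F}\alpha_{k}/2$ to offset the Young penalty from the $\Delta f_{k}$ cross term, keep the remaining $-\mu_{F}\alpha_{k}/2$ for the bracket in \eqref{lem:xhat:Ineq}, and push $L^2\alpha_{k}^2$ into the $L^2(4L^2+1)\alpha_{k}^2$ bucket.
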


\begin{lem}\label{lem:yhat}
Let $\beta_{k}$ be chosen as 
\begin{align}
\beta_{k} \leq \frac{\mu_{G}}{4L^2(1+L)^2}\cdot    \label{lem:yhat:stepsizes}
\end{align}
Then we have
\begin{align}
\|\yhat_{k+1}\|^2&\leq (1 - \mu_{G}\beta_{k})\|\yhat_{k}\|^2 - \frac{\mu_{G}\beta_{k}}{2}\|\yhat_{k}\|^2  + \left(\frac{8\beta_{k}}{\mu_{G}} + 2\beta_{k}^2\right) \Big( L^2\|\xhat_{k}\|^2 + \|\Delta g_{k}\|^2\Big). \label{lem:yhat:Ineq}
\end{align}
\end{lem}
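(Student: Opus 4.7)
The plan is to start from the identity $\yhat_{k+1} = \yhat_k - \beta_k g_k$, which gives
\begin{align*}
\|\yhat_{k+1}\|^2 = \|\yhat_k\|^2 - 2\beta_k\langle \yhat_k, g_k\rangle + \beta_k^2\|g_k\|^2,
\end{align*}
and then to decompose $g_k$ into three natural pieces: the ``population'' drift $G(H(y_k),y_k)$ on the invariant manifold $x=H(y)$, the off-manifold deviation $G(x_k,y_k)-G(H(y_k),y_k)$, and the averaging residual $\Delta g_k$. Each of these pieces will be controlled separately in the inner product and in the square.

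For the inner product $-2\beta_k\langle\yhat_k,g_k\rangle$, the one-point strong monotonicity \eqref{assump:sm:G:ineq} extracts the principal contractive term $-2\beta_k\langle\yhat_k,G(H(y_k),y_k)\rangle \le -2\mu_G\beta_k\|\yhat_k\|^2$. The remaining two inner products are handled by Cauchy--Schwarz together with the Lipschitz bound $\|G(x_k,y_k)-G(H(y_k),y_k)\| \le L\|\xhat_k\|$ from Assumption~\ref{assump:smooth}, followed by Young's inequality $2ab\le \epsilon a^2 + b^2/\epsilon$ applied to each cross term. Choosing the Young parameter proportional to $\mu_G$ yields error coefficients of order $\beta_k/\mu_G$ on $L^2\|\xhat_k\|^2$ and on $\|\Delta g_k\|^2$ while donating only $\mu_G\beta_k/2$ back to the $\|\yhat_k\|^2$ term, so the net contractive coefficient at this stage is $-3\mu_G\beta_k/2$.

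For the square term $\beta_k^2\|g_k\|^2$, the plan is to apply Lemma~\ref{lem:inequalities}, which gives $\|g_k\|\le \|\Delta g_k\|+L\|\xhat_k\|+L(1+L)\|\yhat_k\|$. Squaring and splitting by elementary $(a+b+c)^2$-type inequalities produces contributions proportional to $\beta_k^2\|\Delta g_k\|^2$ and $L^2\beta_k^2\|\xhat_k\|^2$, plus the problematic term $L^2(1+L)^2\beta_k^2\|\yhat_k\|^2$. The step-size restriction \eqref{lem:yhat:stepsizes} is designed precisely so that, after cancelling one factor of $\beta_k$, this last contribution is dominated by $\mu_G\beta_k\|\yhat_k\|^2/\text{const}$ and can be absorbed into the strong-monotonicity contraction without destroying it, leaving exactly the decomposition $(1-\mu_G\beta_k)\|\yhat_k\|^2 - (\mu_G\beta_k/2)\|\yhat_k\|^2$ shown in \eqref{lem:yhat:Ineq}.

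The main obstacle I anticipate is the bookkeeping: the Young parameter and the splitting used on $\|g_k\|^2$ must be calibrated so that (i) the $\|\yhat_k\|^2$ debit from Young's inequality plus the quadratic-in-$\beta_k$ term together fit inside the budget provided by strong monotonicity while still leaving the extra $-\mu_G\beta_k/2\|\yhat_k\|^2$ slack that the lemma isolates for later use in the coupled analysis; and (ii) the coefficients in front of $L^2\|\xhat_k\|^2$ and of $\|\Delta g_k\|^2$ come out equal, so that they can be combined into the single symmetric factor $8\beta_k/\mu_G+2\beta_k^2$ appearing in the statement. The symmetric role of $L^2\|\xhat_k\|^2$ and $\|\Delta g_k\|^2$ in the final inequality strongly suggests using the same Young splitting constant on both cross terms, and splitting the $\|g_k\|^2$ bound in a way that respects the same symmetry.
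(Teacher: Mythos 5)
Your high-level ingredients (one-point strong monotonicity for the drift, Lipschitz continuity plus Young's inequality for the off-manifold deviation and the averaging residual, and the step-size condition to absorb the quadratic drift contribution) are exactly those of the paper's proof, but the order in which you expand the square loses constant factors that make the stated inequality unreachable. Concretely: write $v = \big(G(x_k,y_k)-G(H(y_k),y_k)\big) + \Delta g_k$, so $\|v\|^2 \le 2L^2\|\xhat_k\|^2 + 2\|\Delta g_k\|^2$. To end at $(1-\tfrac{3}{2}\mu_G\beta_k)\|\yhat_k\|^2$ starting from the $-2\mu_G\beta_k\|\yhat_k\|^2$ that strong monotonicity provides, the total positive debit on $\|\yhat_k\|^2$ must not exceed $\tfrac{1}{2}\mu_G\beta_k\|\yhat_k\|^2$. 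In your fully expanded route, bounding $\beta_k^2\|g_k\|^2$ through Lemma~\ref{lem:inequalities} with a symmetric $(a+b+c)^2\le 3(a^2+b^2+c^2)$ split puts a factor $3$ on the drift square, and \eqref{lem:yhat:stepsizes} then only gives $3L^2(1+L)^2\beta_k^2\|\yhat_k\|^2 \le \tfrac{3}{4}\mu_G\beta_k\|\yhat_k\|^2$; together with the Young donation of $\tfrac{1}{2}\mu_G\beta_k\|\yhat_k\|^2$ (or even the minimal $\tfrac{1}{4}\mu_G\beta_k\|\yhat_k\|^2$) from the two cross terms, the debit already exceeds the budget, and you land at a contraction no better than $1-\tfrac{5}{4}\mu_G\beta_k$ even in the limit of an infinitely lopsided Young parameter. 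An asymmetric split of $\|g_k\|^2$ does not rescue this: any inflation factor $c>1$ on the drift square costs $c\mu_G\beta_k/4$, which saturates the budget already at $c=2$, and the full expansion additionally produces the cross term $2\beta_k^2\langle G(H(y_k),y_k),\, v\rangle$, which must be paid for separately since it also involves $\|\yhat_k\|$.

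The paper's proof sidesteps both costs by grouping before squaring: it writes $\yhat_{k+1} = \big(\yhat_k - \beta_k G(H(y_k),y_k)\big) - \beta_k v$ and uses (i) $\|\yhat_k - \beta_k G(H(y_k),y_k)\|^2 \le \big(1 - 2\mu_G\beta_k + L^2(1+L)^2\beta_k^2\big)\|\yhat_k\|^2$, so the drift square enters with coefficient exactly $1$ and costs only $\tfrac{1}{4}\mu_G\beta_k\|\yhat_k\|^2$ under \eqref{lem:yhat:stepsizes}, and (ii) $\|\yhat_k - \beta_k G(H(y_k),y_k)\| \le \|\yhat_k\|$, so the single remaining cross term is bounded by $2\beta_k\|\yhat_k\|\|v\|$ with the drift--perturbation interaction absorbed at no extra charge. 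If you adopt this grouping, the rest of your plan (Young with parameter $\mu_G/4$ giving the $\tfrac{8\beta_k}{\mu_G}$ coefficients, then $\beta_k^2\|v\|^2 \le 2\beta_k^2\big(L^2\|\xhat_k\|^2+\|\Delta g_k\|^2\big)$) delivers \eqref{lem:yhat:Ineq} with exactly the stated constants.
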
 

\subsection{Finite-Time Convergence Rates}
Leveraging the preliminary results in the previous section, we are ready to present the main result of this paper. In Theorem \ref{thm:rate} below, we will show that the iterates generated by the proposed method in \eqref{alg:x-accelerated} and \eqref{alg:y-accelerated} will converge to the desired solution $(x^{\star},y^{\star})$ in expectation. Our analysis will be based on the following candidate Lyapunov function  
\begin{align}
V_{k} = \|\Delta f_k\|^2 + \|\Delta g_{k}\|^2 + \|\xhat_{k}\|^2 + \|\yhat_{k}\|^2. \label{Lyapunov}   
\end{align}
We will show that each term on the right-hand side of \eqref{Lyapunov} will converge to zero at the same rate. Therefore, we do not need to use the coupling Lyapunov function as in \cite{GuptaSY2019_twoscale, Doan_two_time_SA2020}. 
\begin{thm}\label{thm:rate}
Let $\{f_{k}, g_{k}, x_{k},y_{k}\}$ be generated by \eqref{alg:x-accelerated} and \eqref{alg:y-accelerated}, and $\{\lambda_{k},\gamma_{k},\alpha_{k},\beta_{k}\}$ defined in \eqref{step-sizes} with $\lambda_{k} = \gamma_{k} \leq 1/4$ and
\begin{align}
\begin{aligned}
&\frac{\beta_{k}}{\alpha_{k}} \leq \frac{\mu_{F}\mu_{G}}{32L^2(1+L)^4},\\ 
&\alpha_{k} \leq \min \left\{\frac{\mu_{F}}{32L^2(L+1)^2};\; \frac{\mu_{G}}{L^4(1+L)^2}\right\},\\
& \frac{\alpha_{k}}{\lambda_{k}}\leq \min\left\{\frac{1}{24L};\; \frac{\mu_{G}}{144(1+\mu_{G})};\;\frac{\mu_{G}}{192L^4(1+L)^2}\right\}.
\end{aligned}
\label{thm:rate:stepsizes}
\end{align}
Then we obtain
\begin{align}
\Eset[V_{k+1}] &\leq (1-\mu_{G}\beta_{k})\Eset[V_{k}] + 2\lambda_{k}^2\Eset\Big[\|\epsilon_{k}\| + \|\psi_{k}\|^2\Big]\cdot \label{thm:rate:Ineq}   
\end{align}
\end{thm}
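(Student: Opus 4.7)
The plan is to sum the four per-step bounds from Lemmas \ref{lem:Delta-f}--\ref{lem:yhat}, take conditional expectation given $\Qcal_{k-1}$ to eliminate the Martingale cross terms, and then use the step size restrictions \eqref{thm:rate:stepsizes} to absorb every cross-coupling term so that each of the four squared residuals contracts at a common rate of at least $\mu_{G}\beta_{k}$. By Assumption \ref{assump:noise}, $\xi_{k}$ and $\psi_{k}$ are zero-mean and independent of the $\Qcal_{k-1}$-measurable quantities $\Delta f_{k},\Delta g_{k},\xhat_{k},\yhat_{k}$, so the cross terms $2(1-\lambda_{k})\lambda_{k}\epsilon_{k}^{T}\Delta f_{k}$ and $2(1-\gamma_{k})\gamma_{k}\psi_{k}^{T}\Delta g_{k}$ in Lemmas \ref{lem:Delta-f} and \ref{lem:Delta-g} vanish after conditioning, and the pure noise terms collapse (using $\lambda_{k}=\gamma_{k}$) into the final additive term $2\lambda_{k}^{2}\Eset[\|\epsilon_{k}\|^{2}+\|\psi_{k}\|^{2}\mid \Qcal_{k-1}]$.

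After summation I would regroup the right-hand side as $c_{f}\|\Delta f_{k}\|^{2}+c_{g}\|\Delta g_{k}\|^{2}+c_{x}\|\xhat_{k}\|^{2}+c_{y}\|\yhat_{k}\|^{2}$ plus the noise term, where each $c_{\bullet}$ aggregates contributions from all four lemmas, and the goal becomes $c_{\bullet}\leq 1-\mu_{G}\beta_{k}$ for each residual. For $c_{x}$ and $c_{y}$ the strong-monotonicity contractions in Lemmas \ref{lem:xhat} and \ref{lem:yhat} already supply raw rates $(1-\mu_{F}\alpha_{k})$ and $(1-\mu_{G}\beta_{k})$ together with explicit surplus margins $-\mu_{F}\alpha_{k}/2$ and $-\mu_{G}\beta_{k}/2$. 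These margins absorb the $O(\beta_{k}/\alpha_{k})$ penalty from Lemma \ref{lem:xhat} via $\beta_{k}/\alpha_{k}\leq \mu_{F}\mu_{G}/(32L^{2}(1+L)^{4})$, and the $O(\alpha_{k}^{2}/\lambda_{k})$, $O(\beta_{k}^{2}/\lambda_{k})$ cross contributions from Lemmas \ref{lem:Delta-f} and \ref{lem:Delta-g} via $\alpha_{k}/\lambda_{k}\leq \mu_{G}/(192L^{4}(1+L)^{2})$. Since $\mu_{G}\leq \mu_{F}$ and $\beta_{k}\leq \alpha_{k}$, the leftover $\mu_{F}\alpha_{k}/2$ margin also dominates $\mu_{G}\beta_{k}$, yielding $c_{x},c_{y}\leq 1-\mu_{G}\beta_{k}$.

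For $c_{f}$ and $c_{g}$ the available margin is the bracket $(1/4 - 8L^{2}\alpha_{k}^{2}/\lambda_{k}^{2})\lambda_{k}$; the restriction $\alpha_{k}/\lambda_{k}\leq 1/(24L)$ makes this at least $\lambda_{k}/8$, and the remaining $O(\alpha_{k})$ and $O(\alpha_{k}^{2}/\lambda_{k})$ contributions that Lemmas \ref{lem:xhat} and \ref{lem:Delta-g} inject into $c_{f}$ (and symmetrically for $c_{g}$) are absorbed by a further shrinkage of the same margin, again controlled by $\alpha_{k}/\lambda_{k}$ being small. Finally, the bound $\alpha_{k}/\lambda_{k}\leq \mu_{G}/(144(1+\mu_{G}))$ forces $\mu_{G}\beta_{k}\leq \lambda_{k}/16$, so $c_{f},c_{g}\leq 1-\mu_{G}\beta_{k}$ as well. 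Taking the tower expectation then yields \eqref{thm:rate:Ineq}.

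The main obstacle I expect is the precise bookkeeping needed to reduce $c_{x}$ and $c_{f}$: the cross contributions $(2\alpha_{k}/\mu_{F})\|\Delta f_{k}\|^{2}$ in Lemma \ref{lem:xhat} and $(20L^{4}\alpha_{k}^{2}/\lambda_{k})\|\xhat_{k}\|^{2}$ in Lemma \ref{lem:Delta-f} lie at exactly the order of the available contraction margins, so the individual constants in \eqref{thm:rate:stepsizes} must be tracked rather than hidden in big-$O$ notation; each of the three separate inequalities comprising the bound on $\alpha_{k}/\lambda_{k}$ is tailored to neutralize one specific coupling of this form. Once those constants are aligned the remainder of the argument is a direct summation of Lemmas \ref{lem:Delta-f}--\ref{lem:yhat}.
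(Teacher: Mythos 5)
Your proposal follows essentially the same route as the paper's proof: sum the four lemma bounds, take conditional expectation so the martingale cross terms $2(1-\lambda_k)\lambda_k\epsilon_k^T\Delta f_k$ and $2(1-\gamma_k)\gamma_k\psi_k^T\Delta g_k$ vanish and the noise collapses (via $\lambda_k=\gamma_k$) into $2\lambda_k^2\Eset[\|\epsilon_k\|^2+\|\psi_k\|^2]$, then use the step-size conditions together with $\mu_G\le\mu_F$, $\beta_k\le\alpha_k$, and $\mu_G\alpha_k\le\lambda_k=\gamma_k$ to show every residual contracts at rate at least $\mu_G\beta_k$. The paper merely packages your coefficient bookkeeping $c_\bullet\le 1-\mu_G\beta_k$ as four explicit nonnegative quantities $A_1,\dots,A_4$, so the two arguments coincide.
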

\begin{proof}
It is straightforward to verify that under the conditions in \eqref{thm:rate:stepsizes}, $\{\lambda_{k},\gamma_{k},\alpha_{k},\beta_{k}\}$ satisfy the conditions in Lemmas \ref{lem:Delta-f}--\ref{lem:yhat} and $\beta_{k}\leq \alpha_{k}$. In addition, by Assumption \ref{assump:noise} we have   $\Eset[\epsilon_{k}\mid \Qcal_{k}] = \Eset[\psi_{k}\mid \Qcal_{k}] = 0.$   Thus, taking the conditional expectation w.r.t $\Qcal_{k}$ on both sides of Eqs.\ \eqref{lem:Delta-f:ineq}, \eqref{lem:Delta-g:ineq}, \eqref{lem:xhat:Ineq}, and \eqref{lem:yhat:Ineq} we obtain
\begin{align}
\Eset[V_{k+1}\mid \Qcal_{k}]&= \Eset[\|\Delta f_{k+1}\|^2 + \|\Delta g_{k+1}\|^2 + \|\xhat_{k+1}\|^2 + \|\yhat_{k+1}\|^2\mid \Qcal_{k}]\notag\\
&= (1-\lambda_{k})\|\Delta f_{k}\|^2 + (1-\gamma_{k})\|\Delta g_{k}\|^2 + \left(1 -\mu_{F}\alpha_{k}\right)\|\xhat_{k}\|^2 + (1 - \mu_{G}\beta_{k})\|\yhat_{k}\|^2  \notag\\
&\quad + 2\lambda_{k}^2\Eset\left[\|\epsilon_{k}\|^2 + \|\psi_{k}\|^2\mid \Qcal_{k}\right]  - A_{1} - A_{2} - A_{3} - A_{4},
\label{thm:rate:Eq1}
\end{align}
where $A_{i}$, for $i=1,2,3,4,$ are defined as follows
\begin{align*}
A_{1} &= \frac{\lambda_{k}}{4} - \frac{8L^2\alpha_{k}^2}{\lambda_{k}} - \frac{8L^2\alpha_{k}^2}{\gamma_{k}} - \frac{2\alpha_{k}}{\mu_{F}} - 4\alpha_{k}^2.\\
A_{2} &= \frac{\gamma_{k}}{4} - \frac{12L^2\beta_{k}^2}{\gamma_{k}} -\frac{12L^2\beta_{k}^2}{\lambda_{k}} - \frac{(8+\mu_{G})\beta_{k}}{\mu_{G}} - 4L^2\alpha_{k}\beta_{k} - 2\beta_{k}^2.\\
A_{3} &= \frac{\mu_{F}\alpha_{k}}{2} - \frac{4L^3\big(1+L(1+L)^2\big)\beta_{k}}{\mu_{G}} - \frac{8L^2\beta_{k}}{\mu_{G}} - \frac{20L^4\alpha_{k}^2}{\lambda_{k}} - \frac{20L^4\alpha_{k}^2}{\gamma_{k}} - L^2(4L^2+1)\alpha_{k}^2 - 2L^2\beta_{k}^2.\notag\\
A_{4} &= \frac{\mu_{G}\beta_{k}}{4} - \frac{12L^4(1+L)^2\beta_{k}^2}{\lambda_{k}}- \frac{12L^4(1+L)^2\beta_{k}^2}{\gamma_{k}}- 4L^4(1+L)^2\alpha_{k}\beta_{k}.
\end{align*}
Using the step-size conditions in \eqref{thm:rate:stepsizes} it is straightforward to verify that each $A_{i}$, for $i=1,2,3,4$, is nonnegative.  Therefore, since $\mu_{G}\leq \mu_{F}$, $\beta_{k}\leq \alpha_{k}$, and $\mu_{G}\alpha_{k}\leq \lambda_{k} = \gamma_{k}$, taking the expectation on both sides of \eqref{thm:rate:Eq1} gives \eqref{thm:rate:Ineq}, which conclude our proof.
\end{proof}
Using the result in Theorem \ref{thm:rate}, in the following lemma we present one choice of step sizes that gives a convergence rate $1/k$ of $V_{k}$ to zero in expectation.  

\begin{lem}
Let $\{f_{k}, g_{k}, x_{k},y_{k}\}$ be generated by \eqref{alg:x-accelerated} and \eqref{alg:y-accelerated}, and $\{\lambda_{k},\gamma_{k},\alpha_{k},\beta_{k}\}$ defined in \eqref{step-sizes} with 
\begin{align}
    \begin{aligned}
C_{\beta} &= \frac{2}{\mu_{G}},\quad C_{\alpha} = \frac{64L^2(1+L)^4}{\mu_{F}\mu_{G}^2},\\
C_{\lambda} &= C_{\gamma} \geq \frac{C_{\alpha}}{\min\left\{\frac{1}{24L};\; \frac{\mu_{G}}{144(1+\mu_{G})};\;\frac{\mu_{G}}{192L^4(1+L)^2}\right\}},\\
h &\geq  \frac{C_{\alpha}}{\min\left\{\frac{1}{4};\; \frac{\mu_{F}}{32L^2(1+L)^2};\;\frac{\mu_{G}}{L^4(1+L)^2}\right\}}\cdot
    \end{aligned}
    \label{lem:rate:stepsizes}
\end{align}
Then we have 
\begin{align}
\Eset[V_{k+1}]
\leq \frac{h^2\Eset[V_{0}]}{(k+h+1)^2} + \frac{C_{\lambda}^2(\Gamma_{11} + \Gamma_{22})}{k+h+1}\cdot    \label{lem:rate:Ineq}
\end{align}
\end{lem}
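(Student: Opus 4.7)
The plan is to apply Theorem~\ref{thm:rate} and then unfold the resulting scalar recursion. First I would verify that the choice of constants in \eqref{lem:rate:stepsizes} satisfies every condition of \eqref{thm:rate:stepsizes}. With $C_\beta=2/\mu_G$ and $C_\alpha=64L^2(1+L)^4/(\mu_F\mu_G^2)$, the ratio $\beta_k/\alpha_k = C_\beta/C_\alpha = \mu_F\mu_G/(32L^2(1+L)^4)$ is exactly the required bound, and the lower bounds prescribed on $C_\lambda=C_\gamma$ and on $h$ are precisely what is needed to make $\alpha_k/\lambda_k$ and $\alpha_k$ small enough (since dividing $C_\alpha$ by the relevant thresholds gives the stated inequalities). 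This verification is bookkeeping but it is the only nontrivial step besides the recursion.

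Next, by Assumption~\ref{assump:noise}, $\Eset[\|\xi_k\|^2]+\Eset[\|\psi_k\|^2]=\Gamma_{11}+\Gamma_{22}$. Substituting into \eqref{thm:rate:Ineq} and using $\mu_G\beta_k = \mu_G\cdot\tfrac{2}{\mu_G(k+h+1)} = \tfrac{2}{k+h+1}$ together with $\lambda_k=C_\lambda/(k+h+1)$, I obtain the one-dimensional recursion
\begin{align*}
\Eset[V_{k+1}] \leq \frac{k+h-1}{k+h+1}\,\Eset[V_k] + \frac{2C_\lambda^2(\Gamma_{11}+\Gamma_{22})}{(k+h+1)^2}.
\end{align*}

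To solve it, I would multiply both sides by the integrating factor $(k+h+1)^2$ and use the telescoping identity $(k+h+1)(k+h-1)=(k+h)^2-1\leq (k+h)^2$. Setting $T_k := (k+h)^2\,\Eset[V_k]$, this yields $T_{k+1}\leq T_k + 2C_\lambda^2(\Gamma_{11}+\Gamma_{22})$, so that iterating from $T_0 = h^2\,\Eset[V_0]$ gives $T_{k+1}\leq h^2\,\Eset[V_0] + 2(k+1)C_\lambda^2(\Gamma_{11}+\Gamma_{22})$. Dividing by $(k+h+1)^2$ and using $(k+1)/(k+h+1)\leq 1$ on the noise term produces the desired bound \eqref{lem:rate:Ineq} (up to an inessential factor on the variance term).

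The main obstacle is really the step-size verification in the first paragraph: one has to check that the conditions in \eqref{thm:rate:stepsizes} are closed under the chosen constants, and in particular that $\lambda_k=\gamma_k\leq 1/4$ and $\beta_k\leq\alpha_k$ for every $k\geq 0$ given the lower bound on $h$. Once Theorem~\ref{thm:rate} is in force, the remaining argument is the standard ``multiply by an integrating factor and telescope'' trick for recursions of the form $a_{k+1}\leq (1-c/(k+h+1))a_k + d/(k+h+1)^2$ with $c=2$, so no delicate analysis is needed beyond tracking constants.
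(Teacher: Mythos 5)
Your proposal is correct and follows essentially the same route as the paper: verify that \eqref{lem:rate:stepsizes} implies \eqref{thm:rate:stepsizes}, invoke Theorem~\ref{thm:rate}, and unroll the recursion by multiplying through by $(k+h+1)^2$ and telescoping. The factor of $2$ you carry on the variance term is in fact the honest constant coming from \eqref{thm:rate:Ineq} (the paper silently drops it), so your version is if anything slightly more careful.
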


\begin{proof}
We first note that the conditions in \eqref{lem:rate:stepsizes} imply the ones in \eqref{thm:rate:stepsizes}. Thus, by using \eqref{thm:rate:Ineq} with $\beta_{k} = 2/(\mu_{G}(k+h+1))$ we obtain  
\begin{align*}
\Eset[V_{k+1}] &\leq (1-\mu_{G}\beta_{k})\Eset[V_{k}] + 2\lambda_{k}^2 \big(\Eset[\|\epsilon_{k}\|] + \Eset[\|\psi_{k}\|^2]\big)\notag\\
&= \left(1-\frac{2}{k+h+1}\right) \Eset[V_{k}] + \frac{C_{\lambda}^2(\Gamma_{11} + \Gamma_{22})}{(k+h+1)^2}\notag\\ 
& = \frac{k+h-1}{k+h+1} \Eset[V_{k}] + \frac{C_{\lambda}^2(\Gamma_{11} + \Gamma_{22})}{(k+h+1)^2},
\end{align*}
where the second inequality is due to Assumption \ref{assump:noise}. Multiplying both sides of the preceding relation by $(k+h+1)^2$ and since $h>1$ we obtain
\begin{align*}
(k+h+1)^2\Eset[V_{k+1}]&\leq (k+h+1)(k+h-1)\Eset[V_{k}] + C_{\lambda}^2(\Gamma_{11} + \Gamma_{22}) \notag\\   
&\leq (k+h)^2\Eset[V_{0}] + C_{\lambda}^2(\Gamma_{11} + \Gamma_{22})\notag\\
&\leq h^2\Eset[V_{0}] + C_{\lambda}^2(\Gamma_{11} + \Gamma_{22})(k+1),
\end{align*}
which when diving both sides by $(k+1+h)^2$ yields \eqref{thm:rate:Ineq}.

\begin{remark}\label{remark:thm}
In \cite{han2024finite}, the authors show that the updates in \eqref{alg:xy} can achieve an $\Ocal(1/k)$ convergence rate under a strong assumption on $H$, i.e., $H$ satisfies \eqref{assump:smooth:ineq} and 
\begin{align*}
\|H(y_{1}) - H(y_{2}) - \nabla H(y_{2})^T(y_1-y_{2})\| \leq L\|y_{1} - y_{2}\|^{\zeta},     
\end{align*}
where $\zeta \in [1.5, 2]$. This assumption basically implies that the impact of the noise in $y$ to $x$ update is negligible, i.e., on the high order of step sizes. Indeed, under this assumption the analysis in \cite{Doan_two_time_SA2020} (with some simple modification) also implies that the two-time-scale SA in \eqref{alg:xy} has a convergence rate $\Ocal(1/k)$. While this assumption satisfies in some settings, e.g., see examples presented in \cite{han2024finite}, it may not hold in general (e.g., $H(y) = |y|$ where $y\in\Rset$). On the other hand, in this paper we propose a new approach to achieve an $\Ocal(1/k)$ convergence under a more general condition on $H$, i.e., $H$ is Lipschitz continuous. Finally, we note that when $H$ is linear as presented in Remark \ref{remark:assumption}, the high-order smooth condition above is satisfied. Thus, one can utilize the techniques in \cite{han2024finite, Doan_two_time_SA2020} to show an $\Ocal(1/k)$ convergence rate of \eqref{alg:xy}. This can provide an alternative approach to achieve the same convergence rate as studied in \cite{Kaledin_two_time_SA2020,haque2023tight} for the linear setting. 
\end{remark}

\end{proof}

\section{Simulations}\label{sec:simulations}
In this section, we use the two-time-scale SA to study reinforcement learning algorithms. In particular, we apply the development of fast two-time-scale SA in this paper to design new variants of gradient TD learning in \cite{Sutton2009a} for solving policy evaluation problems in Markov decision processes (MDPs) and online actor-critic methods for solving linear quadratic control (LQR) problems. In our simulations, we will show that the proposed methods converge faster than the existing methods, which agree with our theoretical results.

\subsection{Gradient TD under Linear Function Approximation}
In this section, we will apply different variants of TD learning to solve the policy evaluation problems under linear function approximation in an off-policy setting. In particular, we will evaluate the performance of TD learning (originally proposed in \cite{Sutton1988_TD}, its gradient counterpart (gradient temporal difference with correction or TDC \cite{Sutton2009a}), and the fast TDC based on the proposed approach of \eqref{alg:x-accelerated} and \eqref{alg:y-accelerated} in this paper. For gradient TD learning, we will follow the setting presented in Section \ref{sec:motivating_applications} where $V_{y} = \Phi y$, a linear function of the feature $\Phi$ as studied in \cite{Sutton2009a}. In this case, the gradient TD learning method is a variant of linear two-time-scale SA.  

For our simulation, we generate completely random discounted MDPs with transition probability matrix $\Pcal\in\mathbb{R}^{|\Scal|\times|\Acal|\times |\Scal|}$, $|\Scal|=|\Acal|=50$, drawn i.i.d. from $\text{Unif}(0,1)$ and then normalized such that
\begin{align*}
    \sum_{s'\in\Scal}\Pcal(s'\mid s,a)=1,\quad\forall s,a.
\end{align*}
The behavior policy $\pi_b$ to generate data for our learning is chosen to be uniform for all states. Using the generated data by $\pi_{b}$ we aim to evaluate a softmax policy $\pi$ defined as 
\begin{align*}
    \pi(a\mid s)=\frac{\exp(\psi_{s,a})}{\sum_{a'}\exp(\psi_{s,a'})},\quad\forall s,a,
\end{align*}
where $\psi\in\mathbb{R}^{|\Scal|\times|\Acal|}$ is drawn randomly such that $\psi_{s,a}\sim N(0,1)$. 
We draw the feature matrix $\Phi\in\mathbb{R}^{|\Scal|\times d}$,  $d=10$.
, entry-wise i.i.d. from $N(0,1)$.  The value function $V\in\mathbb{R}^{|\Scal|}$ is the solution to the Bellman equation 
\begin{align*}
    V^{\pi} = R + \gamma P^{\pi} V,
\end{align*}
where the discount factor $\gamma$ is 0.5. We design the experiments such that the value function $V\in\mathbb{R}^{|\Scal|}$ lies in the span of the feature matrix, i.e., there exists a point $\theta^{\star}$ s.t. $V^{\pi} = \Phi\theta^{\star}$. The goal of TD learning is to find $\theta^{\star}$ based on the data $\{s_{k},a_{k},r_{k}\}$ generated by the behavior policy $\pi_{b}$.

For our implementation, the step sizes $\alpha_k$ and $\beta_k$ are set to constant values $5e-4$ and $2e-3$ for simplicity, while the step size $\lambda_k$ is selected as
\[\lambda_k=\frac{4}{5(k+10)}\cdot\]
Figure \ref{fig:random_policy_eval} shows the performance of the three variants of TD learning in our simulation. We observe that although the three variants achieve the same theoretical convergence rate $\Ocal(1/k)$ under linear function approximation, the proposed fast TDC performs slightly better than the other two in this simple simulation. It suggests that estimating the noisy operators before updating the main iterates does accelerate the convergence of the algorithms.

\begin{figure}
\centering
\begin{minipage}{\columnwidth}
  \centering
  \includegraphics[width=\linewidth]{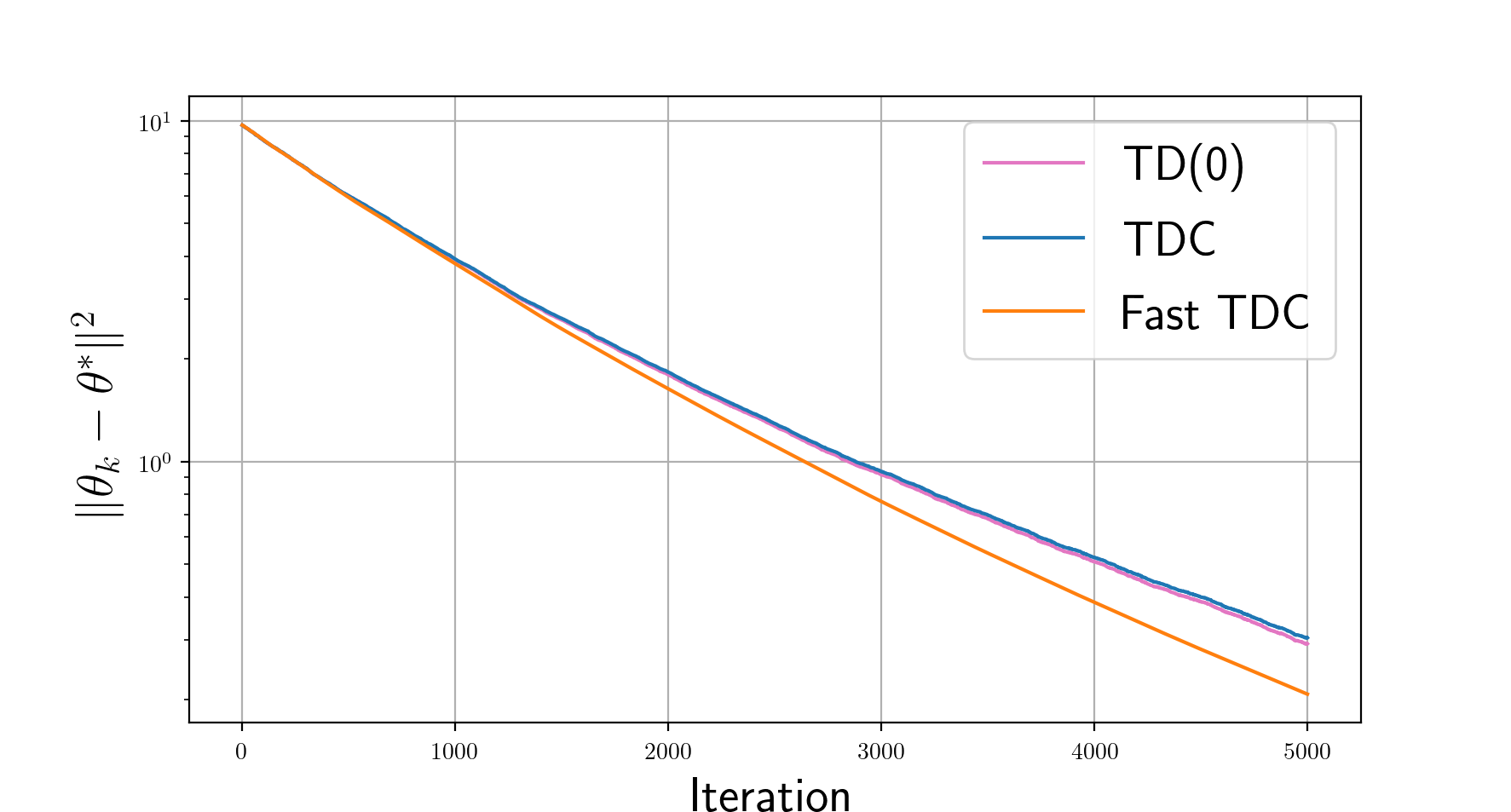}
  \caption{Fast TDC Algorithm for Random Policy Evaluation}
  \label{fig:random_policy_eval}
\end{minipage}%
\begin{minipage}{.08\textwidth}
\hspace{0pt}
\end{minipage}
\begin{minipage}{\columnwidth}
  \centering
  \includegraphics[width=\columnwidth]{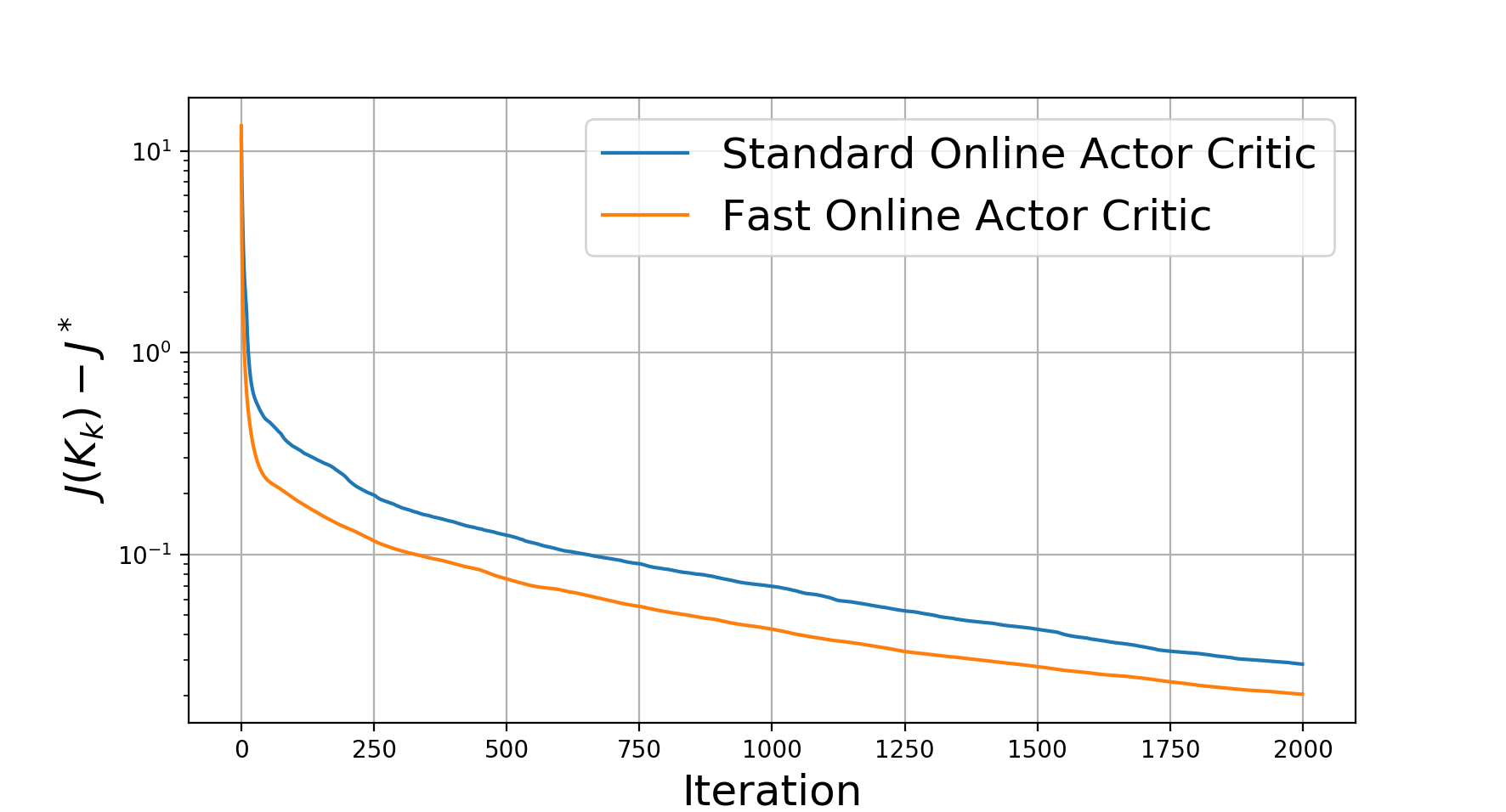}
  \caption{Performance of Fast Actor-Critic Algorithm for LQR}
  \label{fig:policy_optimization_LQR}
\end{minipage}
\vspace{-10pt}
\end{figure}

\subsection{Online Actor-Critic Methods for LQR.} 
We next apply the so-called online actor-critic method, an online variant of policy gradient methods to solve the following infinite-horizon average-cost LQR problem
\begin{align*}
&\underset{\{u_{k}\}}{\text{minimize}}\quad \lim_{T\rightarrow\infty}\frac{1}{T}\mathbb{E}\Big[\sum_{k=0}^{T}\left(x_{k}^{\top} Q x_{k}+u_{k}^{\top} R u_{k}\right)\mid x_{0}\Big] \\
&\text{subject to}\quad  x_{k+1}=A x_{k}+B u_{k}+ w_k,
\end{align*}
where $x_k\in\mathbb{R}^{d_1}$, $u_k\in\mathbb{R}^{d_2}$ are the state and the control variables, $w_k\sim N(0,\Psi)\in\mathbb{R}^{d_1}$ is time-invariant system noise, $A\in\mathbb{R}^{d_1\times d_1}$ and  $B\in\mathbb{R}^{d_1\times d_2}$ are the system transition matrices, and $Q\in\mathbb{R}^{d_1\times d_1}, R\in\mathbb{R}^{d_2\times d_2}$ are positive-definite cost matrices. 
It is well-known (see, for example, \cite[Chap.\ 3.1]{bertsekas2012dynamic}) that the optimal control sequence $\{u_k\}$ that solves the LQR problem is a time-invariant linear function of the state 
\begin{align*}
    u_k^\star = -K^{\star} x_k,\label{eq:lqr_u*}
\end{align*}
where $K^{\star}\in\mathbb{R}^{d_2 \times d_1}$ is a matrix that depends on the problem parameters $A,B,Q,R$. This fact will allow us to reformulate the LQR as an optimization program over the feedback gain matrix $K$, i.e., we aim to search over a set of stochastic linear state feedback controller
\begin{align*}
u_{k} = -K x_{k} + \sigma \epsilon_{k}, \quad \epsilon_{k} \sim \text{N}(0,\sigma^2\mathbf{I}),
\end{align*}
with $\sigma \geq 0$. In particular,  by defining $\Psi_{\sigma}=\Psi+\sigma^2 BB^{\top}$, we can re-express the LQR problem as
\begin{align*}
\underset{K}{\text{minimize }} & J(K)\triangleq \operatorname{trace}(P_{K}\Psi_{\sigma})+\sigma^2\operatorname{trace}(R)\\
\text{s.t. }&P_{K}=Q+K^{\top} R K+(A-B K)^{\top} P_{K}(A-B K).
\end{align*}
We will apply the online actor-critic method studied in \cite{zeng2021two} to solve the optimization problem above, a variant of \eqref{alg:xy}. Moreover, in the context of LQR, the underlying operators of actor-critic methods satisfy the assumptions in \ref{sec:preliminaries}. In \cite{zeng2021two}, the authors show that the online actor-critic method converges at a rate $\Ocal(1/k^{2/3})$. On the other hand, by using the proposed development in this paper, online actor-critic method converges at a better rate $\Ocal(1/k)$.   

For our simulation, we will use the same setting as studied in \cite{zeng2021two}, where the transition and costs matrices are 
\begin{align}
&A=\left[\begin{array}{ccc}
0.5 & 0.01 & 0 \\
0.01 & 0.5 & 0.01 \\
0 & 0.01 & 0.5
\end{array}\right], \quad B=\left[\begin{array}{ll}
1 & 0.1 \\
0 & 0.1 \\
0 & 0.1
\end{array}\right],\notag\\ 
&Q=I_{3},\quad R=I_{2}.
\end{align}
The result of this simulation is shown in Figure \ref{fig:policy_optimization_LQR}, where the proposed fast online actor-critic method outperforms the one studied in \cite{zeng2021two}.

\section{Concluding Remarks}
In this paper, by leveraging the Ruppert-Polyak averaging techniques we propose to develop a new variant of the classic two-time-scale SA that achieves a finite-time convergence rate $\Ocal(1/k)$ in mean-squared regime. Our result significantly improves the existing convergence rate of two-time-scale SA, which is $\Ocal(1/k^{2/3})$. The key idea of our approach is to apply the averaging technique to estimate the operators from their samples before updating the main iterates. This can help to reduce the impact of the sampling noise on the updates of the iterates, therefore, improving its convergence towards the desired solutions. An interesting future direction of this paper is to apply the proposed method to improve the performance of existing control, optimization, and learning algorithms that can be represented as variants of two-time-scale SA.

\section*{References}
\bibliographystyle{IEEEtran}
\bibliography{refs}

\begin{thebibliography}{10}
\providecommand{\url}[1]{#1}
\csname url@samestyle\endcsname
\providecommand{\newblock}{\relax}
\providecommand{\bibinfo}[2]{#2}
\providecommand{\BIBentrySTDinterwordspacing}{\spaceskip=0pt\relax}
\providecommand{\BIBentryALTinterwordstretchfactor}{4}
\providecommand{\BIBentryALTinterwordspacing}{\spaceskip=\fontdimen2\font plus
\BIBentryALTinterwordstretchfactor\fontdimen3\font minus
  \fontdimen4\font\relax}
\providecommand{\BIBforeignlanguage}[2]{{%
\expandafter\ifx\csname l@#1\endcsname\relax
\typeout{** WARNING: IEEEtran.bst: No hyphenation pattern has been}%
\typeout{** loaded for the language `#1'. Using the pattern for}%
\typeout{** the default language instead.}%
\else
\language=\csname l@#1\endcsname
\fi
#2}}
\providecommand{\BIBdecl}{\relax}
\BIBdecl

\bibitem{borkar2008}
V.~S. Borkar, \emph{Stochastic Approximation: A Dynamical Systems
  Viewpoint}.\hskip 1em plus 0.5em minus 0.4em\relax Cambridge University
  Press, 2008.

\bibitem{RobbinsM1951}
H.~Robbins and S.~Monro, ``A stochastic approximation method,'' \emph{The
  Annals of Mathematical Statistics}, vol.~22, no.~3, pp. 400--407, 1951.

\bibitem{benveniste2012adaptive}
A.~Benveniste, M.~M{\'e}tivier, and P.~Priouret, \emph{Adaptive algorithms and
  stochastic approximations}.\hskip 1em plus 0.5em minus 0.4em\relax Springer
  Science \& Business Media, 2012, vol.~22.

\bibitem{KondaT2004}
V.~R. Konda and J.~N. Tsitsiklis, ``Convergence rate of linear two-time-scale
  stochastic approximation,'' \emph{The Annals of Applied Probability},
  vol.~14, no.~2, pp. 796--819, 2004.

\bibitem{MokkademP2006}
A.~Mokkadem and M.~Pelletier, ``Convergence rate and averaging of nonlinear
  two-time-scale stochastic approximation algorithms,'' \emph{The Annals of
  Applied Probability}, vol.~16, no.~3, pp. 1671--1702, 2006.

\bibitem{hu2024central}
J.~Hu, V.~Doshi, and D.~Y. Eun, ``Central limit theorem for two-timescale
  stochastic approximation with markovian noise: Theory and applications,''
  \emph{To appear in AISTAT Artificial Intelligence and Statistics (AISTATS).
  arXiv preprint arXiv:2401.09339}, 2024.

\bibitem{Kaledin_two_time_SA2020}
M.~Kaledin, E.~Moulines, A.~Naumov, V.~Tadic, and H.-T. Wai, ``Finite time
  analysis of linear two-timescale stochastic approximation with {M}arkovian
  noise,'' in \emph{Proceedings of Thirty Third Conference on Learning Theory},
  vol. 125, 2020, pp. 2144--2203.

\bibitem{haque2023tight}
S.~U. Haque, S.~Khodadadian, and S.~T. Maguluri, ``Tight finite time bounds of
  two-time-scale linear stochastic approximation with markovian noise,''
  \emph{arXiv preprint arXiv:2401.00364}, 2023.

\bibitem{Doan_two_time_SA2020}
T.~T. Doan, ``Nonlinear two-time-scale stochastic approximation: Convergence
  and finite-time performance,'' \emph{IEEE Transactions on Automatic Control},
  vol.~68, no.~8, pp. 4695--4705, 2023.

\bibitem{han2024finite}
Y.~Han, X.~Li, and Z.~Zhang, ``Finite-time decoupled convergence in nonlinear
  two-time-scale stochastic approximation,'' \emph{arXiv preprint
  arXiv:2401.03893}, 2024.

\bibitem{BTbook1999}
D.~Bertsekas and J.~Tsitsiklis, \emph{Neuro-Dynamic Programming}, 2nd~ed.\hskip
  1em plus 0.5em minus 0.4em\relax Athena Scientific, Belmont, MA, 1999.

\bibitem{TTFBook2009}
T.~Hastie, R.~Tibshirani, and J.~Friedman, \emph{The Elements of Statistical
  Learning : Data Mining, Inference, and Prediction}.\hskip 1em plus 0.5em
  minus 0.4em\relax Springer, 2009.

\bibitem{SBbook2018}
R.~S. Sutton and A.~G. Barto, \emph{Reinforcement Learning: An Introduction},
  2nd~ed.\hskip 1em plus 0.5em minus 0.4em\relax MIT Press, Cambridge, MA,
  2018.

\bibitem{LanBook2020}
G.~Lan, \emph{Lectures on Optimization Methods for Machine Learning}.\hskip 1em
  plus 0.5em minus 0.4em\relax Springer-Nature, 2020.

\bibitem{BottouCN2018}
L.~Bottou, F.~Curtis, and J.~Nocedal, ``Optimization methods for large-scale
  machine learning,'' \emph{SIAM Review}, vol.~60, no.~2, pp. 223--311, 2018.

\bibitem{Bhandari2018_FiniteTD}
J.~Bhandari, D.~Russo, and R.~Singal, ``A finite time analysis of temporal
  difference learning with linear function approximation,'' in \emph{COLT},
  2018.

\bibitem{Karimi_colt2019}
B.~Karimi, B.~Miasojedow, E.~Moulines, and H.-T. Wai, ``Non-asymptotic analysis
  of biased stochastic approximation scheme,'' in \emph{Conference on Learning
  Theory, {COLT} 2019, 25-28 June 2019, Phoenix, AZ, {USA}}, 2019, pp.
  1944--1974.

\bibitem{SrikantY2019_FiniteTD}
R.~Srikant and L.~Ying, ``Finite-time error bounds for linear stochastic
  approximation and {TD} learning,'' in \emph{COLT}, 2019.

\bibitem{HuS2019}
B.~Hu and U.~Syed, ``Characterizing the exact behaviors of temporal difference
  learning algorithms using markov jump linear system theory,'' in
  \emph{Advances in Neural Information Processing Systems 32}, 2019.

\bibitem{Chen_MC_LinearSA_2020}
S.~Chen, A.~Devraj, A.~Busic, and S.~Meyn, ``Explicit mean-square error bounds
  for monte-carlo and linear stochastic approximation,'' ser. Proceedings of
  Machine Learning Research, vol. 108, 26--28 Aug 2020, pp. 4173--4183.

\bibitem{ChenZDMC2019}
Z.~Chen, S.~Zhang, T.~T. Doan, J.-P. Clarke, and S.~T. Maguluri,
  ``Finite-sample analysis of nonlinear stochastic approximation with
  applications in reinforcement learning,'' p. 110623, 2022.

\bibitem{DalalTSM2018}
G.~Dalal, G.~Thoppe, B.~Sz{\"o}r{\'e}nyi, and S.~Mannor, ``Finite sample
  analysis of two-timescale stochastic approximation with applications to
  reinforcement learning,'' in \emph{COLT}, 2018.

\bibitem{GuptaSY2019_twoscale}
H.~Gupta, R.~Srikant, and L.~Ying, ``Finite-time performance bounds and
  adaptive learning rate selection for two time-scale reinforcement learning,''
  in \emph{Advances in Neural Information Processing Systems}, 2019.

\bibitem{Doan_two_time_SA2019}
T.~T. Doan, ``Finite-time analysis and restarting scheme for linear
  two-time-scale stochastic approximation,'' \emph{SIAM Journal on Control and
  Optimization}, vol.~59, no.~4, pp. 2798--2819, 2021.

\bibitem{Dalal_Szorenyi_Thoppe_2020}
G.~Dalal, B.~Szorenyi, and G.~Thoppe, ``A tale of two-timescale reinforcement
  learning with the tightest finite-time bound,'' \emph{Proceedings of the AAAI
  Conference on Artificial Intelligence}, vol.~34, no.~04, pp. 3701--3708, Apr.
  2020.

\bibitem{8919880}
T.~T. Doan and J.~Romberg, ``Linear two-time-scale stochastic approximation a
  finite-time analysis,'' in \emph{2019 57th Annual Allerton Conference on
  Communication, Control, and Computing (Allerton)}, 2019, pp. 399--406.

\bibitem{zeng2021two}
S.~Zeng, T.~T. Doan, and J.~Romberg, ``A two-time-scale stochastic optimization
  framework with applications in control and reinforcement learning,''
  \emph{SIAM Journal on Optimization}, vol.~34, no.~1, pp. 946--976, 2024.

\bibitem{Kokotovic_SP1999}
P.~Kokotovi\'{c}, H.~K. Khalil, and J.~O'Reilly, \emph{Singular Perturbation
  Methods in Control: Analysis and Design}.\hskip 1em plus 0.5em minus
  0.4em\relax Society for Industrial and Applied Mathematics, 1999.

\bibitem{Sutton2009a}
R.~Sutton, H.~R. Maei, and C.~Szepesv\'{a}ri, ``A convergent o(n)
  temporal-difference algorithm for off-policy learning with linear function
  approximation,'' in \emph{Advances in Neural Information Processing Systems
  21}, 2009.

\bibitem{Sutton2009b}
R.~Sutton, H.~R. Maei, D.~Precup, S.~Bhatnagar, D.~Silver, C.~Szepesvári, and
  E.~Wiewiora, ``Fast gradient-descent methods for temporal-difference learning
  with linear function approximation,'' in \emph{Proceedings of the 26th
  International Conference On Machine Learning, ICML}, vol. 382, 01 2009.

\bibitem{Maeietal2009}
H.~R. Maei, C.~Szepesv\'{a}ri, S.~Bhatnagar, D.~Precup, D.~Silver, and R.~S.
  Sutton, ``Convergent temporal-difference learning with arbitrary smooth
  function approximation,'' in \emph{Proceedings of the 22nd International
  Conference on Neural Information Processing Systems}, 2009, p. 1204–1212.

\bibitem{KondaT2003}
V.~R. Konda and J.~N. Tsitsiklis, ``On actor-critic algorithms,'' \emph{SIAM J.
  Control Optim.}, vol.~42, no.~4, 2003.

\bibitem{xu_actor_critic2020}
T.~Xu, Z.~Wang, and Y.~Liang, ``Improving sample complexity bounds for
  (natural) actor-critic algorithms,'' pp. 4358--4369, 2020.

\bibitem{wu_actor_critic2020}
Y.~F. Wu, W.~Zhang, P.~Xu, and Q.~Gu, ``A finite-time analysis of two
  time-scale actor-critic methods,'' pp. 17\,617--17\,628, 2020.

\bibitem{Hong_actor_critic2020}
M.~Hong, H.-T. Wai, Z.~Wang, and Z.~Yang, ``A two-timescale stochastic
  algorithm framework for bilevel optimization: Complexity analysis and
  application to actor-critic,'' pp. 147--180, 2023.

\bibitem{Khodadadian_actorcritic_2021}
S.~Khodadadian, T.~T. Doan, J.~Romberg, and S.~T. Maguluri, ``Finite-sample
  analysis of two-time-scale natural actor–critic algorithm,'' pp.
  3273--3284, 2023.

\bibitem{9992837}
S.~Zeng, T.~T. Doan, and J.~Romberg, ``Finite-time complexity of online
  primal-dual natural actor-critic algorithm for constrained markov decision
  processes,'' in \emph{2022 IEEE 61st Conference on Decision and Control
  (CDC)}, 2022, pp. 4028--4033.

\bibitem{sayin2021decentralized}
M.~Sayin, K.~Zhang, D.~Leslie, T.~Basar, and A.~Ozdaglar, ``Decentralized
  q-learning in zero-sum markov games,'' \emph{Advances in Neural Information
  Processing Systems}, vol.~34, pp. 18\,320--18\,334, 2021.

\bibitem{zeng2022regularized}
S.~Zeng, T.~Doan, and J.~Romberg, ``Regularized gradient descent ascent for
  two-player zero-sum markov games,'' \emph{Advances in Neural Information
  Processing Systems}, vol.~35, pp. 34\,546--34\,558, 2022.

\bibitem{Basarbook1998}
T.~Basar and G.~J. Olsder, \emph{Dynamic Noncooperative Game Theory, 2nd
  Edition}.\hskip 1em plus 0.5em minus 0.4em\relax Society for Industrial and
  Applied Mathematics, 1998.

\bibitem{Goodfellow_GAN_2020}
I.~Goodfellow, J.~Pouget-Abadie, M.~Mirza, B.~Xu, D.~Warde-Farley, S.~Ozair,
  A.~Courville, and Y.~Bengio, ``Generative adversarial networks,''
  \emph{Commun. ACM}, vol.~63, no.~11, p. 139–144, 2020.

\bibitem{Mescheder2017}
L.~Mescheder, S.~Nowozin, and A.~Geiger, ``The numerics of gans,'' in
  \emph{Proceedings of the 31st International Conference on Neural Information
  Processing Systems}, ser. NIPS'17, 2017, p. 1823–1833.

\bibitem{KurakinGB17}
A.~Kurakin, I.~J. Goodfellow, and S.~Bengio, ``Adversarial machine learning at
  scale,'' in \emph{5th International Conference on Learning Representations,
  {ICLR}}, 2017.

\bibitem{Qian_Zhu_Tang_Jin_Sun_Li_2019}
Q.~Qian, S.~Zhu, J.~Tang, R.~Jin, B.~Sun, and H.~Li, ``Robust optimization over
  multiple domains,'' \emph{Proceedings of the AAAI Conference on Artificial
  Intelligence}, vol.~33, pp. 4739--4746, Jul. 2019.

\bibitem{6450111}
Y.-F. Liu, Y.-H. Dai, and Z.-Q. Luo, ``Max-min fairness linear transceiver
  design for a multi-user mimo interference channel,'' \emph{IEEE Transactions
  on Signal Processing}, vol.~61, no.~9, pp. 2413--2423, 2013.

\bibitem{Lan2020_DecentralizedOpt}
G.~Lan, S.~Lee, and Y.~Zhou, ``Communication-efficient algorithms for
  decentralized and stochastic optimization,'' \emph{Mathematical Programming},
  vol. 180, pp. 237--284, 2020.

\bibitem{9085431}
T.-H. Chang, M.~Hong, H.-T. Wai, X.~Zhang, and S.~Lu, ``Distributed learning in
  the nonconvex world: From batch data to streaming and beyond,'' \emph{IEEE
  Signal Processing Magazine}, vol.~37, no.~3, pp. 26--38, 2020.

\bibitem{lin20a}
T.~Lin, C.~Jin, and M.~Jordan, ``On gradient descent ascent for
  nonconvex-concave minimax problems,'' in \emph{Proceedings of the 37th
  International Conference on Machine Learning}, vol. 119.\hskip 1em plus 0.5em
  minus 0.4em\relax PMLR, 13--18 Jul 2020, pp. 6083--6093.

\bibitem{9070155}
S.~Lu, I.~Tsaknakis, M.~Hong, and Y.~Chen, ``Hybrid block successive
  approximation for one-sided non-convex min-max problems: Algorithms and
  applications,'' \emph{IEEE Transactions on Signal Processing}, vol.~68, pp.
  3676--3691, 2020.

\bibitem{Yang_NEURIPS2020}
J.~Yang, N.~Kiyavash, and N.~He, ``Global convergence and variance reduction
  for a class of nonconvex-nonconcave minimax problems,'' in \emph{Advances in
  Neural Information Processing Systems}, vol.~33, 2020, pp. 1153--1165.

\bibitem{Xu2020AUS}
Z.~Xu, H.~Zhang, Y.~Xu, and G.~Lan, ``A unified single-loop alternating
  gradient projection algorithm for nonconvex--concave and convex--nonconcave
  minimax problems,'' \emph{Mathematical Programming}, vol. 201, no.~1, pp.
  635--706, 2023.

\bibitem{Zhang_NEURIPS2020}
J.~Zhang, P.~Xiao, R.~Sun, and Z.~Luo, ``A single-loop smoothed gradient
  descent-ascent algorithm for nonconvex-concave min-max problems,'' in
  \emph{Advances in Neural Information Processing Systems}, vol.~33, 2020, pp.
  7377--7389.

\bibitem{doan2022convergence}
T.~Doan, ``Convergence rates of two-time-scale gradient descent-ascent dynamics
  for solving nonconvex min-max problems,'' in \emph{Learning for Dynamics and
  Control Conference}.\hskip 1em plus 0.5em minus 0.4em\relax PMLR, 2022, pp.
  192--206.

\bibitem{Cherukuri_2017}
A.~Cherukuri, B.~Gharesifard, and J.~Cort\'{e}s, ``Saddle-point dynamics:
  Conditions for asymptotic stability of saddle points,'' \emph{SIAM Journal on
  Control and Optimization}, vol.~55, no.~1, pp. 486--511, 2017.

\bibitem{DoanBS2017}
T.~T. Doan, C.~L. Beck, and R.~Srikant, ``On the convergence rate of
  distributed gradient methods for finite-sum optimization under communication
  delays,'' \emph{Proceedings ACM Meas. Anal. Comput. Syst.}, vol.~1, no.~2,
  pp. 37:1--37:27, 2017.

\bibitem{DoanMR2018b}
T.~T. Doan, S.~T. Maguluri, and J.~Romberg, ``Convergence rates of distributed
  gradient methods under random quantization: A stochastic approximation
  approach,'' \emph{IEEE on Transactions on Automatic Control}, 2020.

\bibitem{8619539}
A.~Reisizadeh, A.~Mokhtari, H.~Hassani, and R.~Pedarsani, ``Quantized
  decentralized consensus optimization,'' in \emph{2018 IEEE Conference on
  Decision and Control (CDC)}, 2018, pp. 5838--5843.

\bibitem{9683681}
M.~M. Vasconcelos, T.~T. Doan, and U.~Mitra, ``Improved convergence rate for a
  distributed two-time-scale gradient method under random quantization,'' in
  \emph{2021 60th IEEE Conference on Decision and Control (CDC)}, 2021, pp.
  3117--3122.

\bibitem{Romeres13}
D.~Romeres, F.~Dörfler, and F.~Bullo, ``Novel results on slow coherency in
  consensus and power networks,'' in \emph{Proc. of 2013 European Control
  Conference}, 2013, pp. 742--747.

\bibitem{JChow85}
J.~Chow and P.~Kokotovic, ``Time scale modeling of sparse dynamic networks,''
  \emph{IEEE Transactions on Automatic Control}, vol.~30, no.~8, pp. 714--722,
  1985.

\bibitem{Biyik08}
E.~Biyik and M.~Arcak, ``Area aggregation and time-scale modeling for sparse
  nonlinear networks,'' \emph{Systems and Control Letters}, vol.~57, no.~2, pp.
  142--149, 2008.

\bibitem{Boker16}
A.~M. Boker, C.~Yuan, F.~Wu, and A.~Chakrabortty, ``Aggregate control of
  clustered networks with inter-cluster time delays,'' in \emph{Proc. of 2016
  American Control Conference}, 2016, pp. 5340--5345.

\bibitem{pham2021distributed}
T.~V. Pham, T.~T. Doan, and D.~H. Nguyen, ``Distributed two-time-scale methods
  over clustered networks,'' in \emph{2021 American Control Conference
  (ACC)}.\hskip 1em plus 0.5em minus 0.4em\relax IEEE, 2021, pp. 4625--4630.

\bibitem{dutta2022convergence}
A.~Dutta, A.~M. Boker, and T.~T. Doan, ``Convergence rates of distributed
  consensus over cluster networks: A two-time-scale approach,'' in \emph{2022
  IEEE 61st Conference on Decision and Control (CDC)}.\hskip 1em plus 0.5em
  minus 0.4em\relax IEEE, 2022, pp. 7035--7040.

\bibitem{9992900}
A.~Dutta, N.~Masrourisaadat, and T.~T. Doan, ``Convergence rates of
  decentralized gradient dynamics over cluster networks: Multiple-time-scale
  lyapunov approach,'' in \emph{2022 IEEE 61st Conference on Decision and
  Control (CDC)}, 2022, pp. 6497--6502.

\bibitem{ruppert1988efficient}
D.~Ruppert, ``Efficient estimations from a slowly convergent robbins-monro
  process,'' Cornell University Operations Research and Industrial Engineering,
  Tech. Rep., 1988.

\bibitem{polyak1992acceleration}
B.~T. Polyak and A.~B. Juditsky, ``Acceleration of stochastic approximation by
  averaging,'' \emph{SIAM journal on control and optimization}, vol.~30, no.~4,
  pp. 838--855, 1992.

\bibitem{chen2021accelerating}
S.~Chen, A.~Devraj, A.~Bernstein, and S.~Meyn, ``Accelerating optimization and
  reinforcement learning with quasi stochastic approximation,'' in \emph{2021
  American Control Conference (ACC)}.\hskip 1em plus 0.5em minus 0.4em\relax
  IEEE, 2021, pp. 1965--1972.

\bibitem{Sutton1988_TD}
R.~S. Sutton, ``Learning to predict by the methods of temporal differences,''
  \emph{Machine Learning}, vol.~3, no.~1, pp. 9--44, Aug 1988.

\bibitem{bertsekas2012dynamic}
D.~Bertsekas, \emph{Dynamic programming and optimal control: Volume I}.\hskip
  1em plus 0.5em minus 0.4em\relax Athena scientific, 2012, vol.~1.

\end{thebibliography}

\section{Appendix}



\subsection{Proof of Lemma \ref{lem:inequalities}}
Using the Lipschitz continuity of $G,H$, \eqref{alg:xyhat}, and $G(H(y^{\star}),y^{\star}) = 0$ we first have 
\begin{align*}
\|g_{k}\| &\leq \|\Delta g_{k} + G(x_{k},y_{k}) - G(H(y_{k}),y_{k})\|  + \|G(H(y_{k}),y_{k}) - G(H(y^{\star}),y^{\star})\|\leq \|\Delta g_{k}\| + L\|\xhat_{k}\| + L(1+L)\|\yhat_{k}\|.
\end{align*}
Second, since $F(H(y_{k}),y_k) = 0$ we have
\begin{align*}
  \|f_{k}\| &= \|\Delta f_{k} + F(x_{k},y_{k}) - F(H(y_{k}),y_k)\|\notag\\ 
  &\leq \|\Delta f_{k}\| + L\|\xhat_{k}\|. 
\end{align*}
Finally, by using Eqs.\ \eqref{alg:x-accelerated}, \eqref{alg:y-accelerated}, and the relations above we immediately obtain the last two inequalities in \eqref{lem:inequalities:Ineq}.  

\subsection{Proof of Lemma \ref{lem:Delta-f}}
Recall that $\Delta f_{k} = f_{k} - F(x_{k},y_{k})$. Using \eqref{alg:x-accelerated} we consider
\begin{align*}
\Delta f_{k+1}&= (1-\lambda_{k})f_{k} + \lambda_{k} F(x_{k},y_{k}) - F(x_{k+1},y_{k+1}) + \lambda_{k}\epsilon_{k}\\
&= (1-\lambda_{k})\Delta f_{k} + F(x_{k},y_{k}) - F(x_{k+1},y_{k+1}) + \lambda_{k}\epsilon_{k},
\end{align*}
which by taking the quadratic norm on both sides gives
\begin{align}
\|\Delta f_{k+1}\|^2&= (1-\lambda_{k})^2\|\Delta f_{k}\|^2 + 2(1-\lambda_{k})\lambda_{k}\epsilon_{k}^T\Delta f_{k} \notag\\ 
&\quad  + 2(1-\lambda_{k})\Delta f_{k}^T(F(x_{k},y_{k}) - F(x_{k+1},y_{k+1}))\notag\\
&\quad + \| F(x_{k},y_{k}) - F(x_{k+1},y_{k+1}) + \lambda_{k}\epsilon_{k}\|^2\notag\\   
&\leq (1-\lambda_{k})^2\|\Delta f_{k}\|^2 + 2(1-\lambda_{k})\lambda_{k}\epsilon_{k}^T\Delta f_{k}\notag\\  
&\quad  + \frac{\lambda_{k}}{2}\|\Delta f_{k}\|^2 + \frac{2}{\lambda_{k}}\|F(x_{k},y_{k}) - F(x_{k+1},y_{k+1})\|^2\notag\\
&\quad + 2\| F(x_{k},y_{k}) - F(x_{k+1},y_{k+1})\|^2 + 2\lambda_{k}^2\|\epsilon_{k}\|^2\notag\\   
&\leq (1-\lambda_{k})\|\Delta f_{k}\|^2 - \Big(\frac{1}{2} - \lambda_{k}\Big)\lambda_{k} \|\Delta f_{k}\|^2  + 2\lambda_{k}^2\|\epsilon_{k}\|^2 + 2(1-\lambda_{k})\lambda_{k}\epsilon_{k}^T\Delta f_{k}\notag\\   
&\quad  + \frac{4}{\lambda_{k}}\|F(x_{k},y_{k}) - F(x_{k+1},y_{k+1})\|^2,\label{lem:Delta-f:Eq1}
\end{align}
where the second inequality is due the Cauchy-Schwarz inequality and $\lambda_{k} < 1$. Since $F$ satisfies Eq.\ \eqref{assump:smooth:ineq}, the last term on the right-hand side of \eqref{lem:Delta-f:Eq1} is bounded as
\begin{align*}
&\|F(x_{k},y_{k}) - F(x_{k+1},y_{k+1})\|^2 \leq L^2\big(\|x_{k} - x_{k+1}\|^2 + \|x_{k+1}- y_{k+1}\|^2\big)  \notag\\
&\leq L^2\alpha_{k}^2\big(\|\Delta f_{k}\| + L\|\xhat_{k}\| \big)^2  + L^2\beta_{k}^2\big(\|\Delta g_{k}\| + L\|\xhat_{k}\| + L(1+L)\|\yhat_{k}\| \big)^2 \notag\\
&\leq 2L^2\alpha_{k}^2\|\Delta f_{k}\|^2 + 3L^2\beta_{k}^2\|\Delta g_{k}\|^2 + 5L^4\alpha_{k}^2\|\xhat_{k}\|^2 + 3L^4(1+L)^2\beta_{k}^2\|\yhat_{k}\|^2,
\end{align*}
where the second inequality is due to \eqref{lem:inequalities:Ineq}. Substituting the relation above into \eqref{lem:Delta-f:Eq1} and using $\lambda_{k} \leq 1/4$ we obtain \eqref{lem:Delta-f:ineq}
\begin{align*}
\|\Delta f_{k+1}\|^2&\leq (1-\lambda_{k})\|\Delta f_{k}\|^2 - \Big(\frac{1}{2} - \lambda_{k}\Big)\lambda_{k} \|\Delta f_{k}\|^2  + 2\lambda_{k}^2\|\epsilon_{k}\|^2\notag\\
&\quad + 2(1-\lambda_{k})\lambda_{k}\epsilon_{k}^T\Delta f_{k} + \frac{8L^2\alpha_{k}^2}{\lambda_{k}}\|\Delta f_{k}\|^2 \notag\\ 
&\quad  + \frac{12L^2\beta_{k}^2}{\lambda_{k}}\|\Delta g_{k}\|^2 + \frac{15L^4\alpha_{k}^2}{\lambda_{k}}\|\xhat_{k}\|^2  + \frac{12L^4(1+L)^2\beta_{k}^2}{\lambda_{k}}\|\yhat_{k}\|^2\notag\\
&\leq (1-\lambda_{k})\|\Delta f_{k}\|^2 - \Big(\frac{1}{4} - \frac{8L^2\alpha_{k}^2}{\lambda_{k}^2} \Big)\lambda_{k} \|\Delta f_{k}\|^2  + 2\lambda_{k}^2\|\epsilon_{k}\|^2\notag\\
&\quad + 2(1-\lambda_{k})\lambda_{k}\epsilon_{k}^T\Delta f_{k} + \frac{12L^2\beta_{k}^2}{\lambda_{k}}\|\Delta g_{k}\|^2 \notag\\ 
&\quad   + \frac{20L^4\alpha_{k}^2}{\lambda_{k}}\|\xhat_{k}\|^2 + \frac{12L^4(1+L)^2\beta_{k}^2}{\lambda_{k}}\|\yhat_{k}\|^2.
\end{align*}

\subsection{Proof of Lemma \ref{lem:Delta-g}}
The proof of this lemma is similar to the one of Lemma \ref{lem:Delta-f}. For completeness, we present it here. Recall that $\Delta g_{k} = g_{k} - G(x_{k,y_{k}})$. By \eqref{alg:y-accelerated} we have
\begin{align*}
&\Delta g_{k+1}= (1-\gamma_{k})\Delta g_{k} + G(x_{k},y_{k})-G(x_{k+1},y_{k+1}) + \gamma_{k}\psi_{k},   
\end{align*}
which by taking the quadratic norm on both sides gives
\begin{align}
\|\Delta g_{k+1}\|^2&= (1-\gamma_{k})^2\|\Delta g_{k}\|^2  + 2(1 - \gamma_{k})\gamma_{k}\psi_{k}^T\Delta g_{k} \notag\\   
&\quad + 2(1-\gamma_{k})\Delta g_{k}^T(G(x_{k},y_{k}) - G(x_{k+1},y_{k+1}))\notag\\
&\quad + \| G(x_{k},y_{k}) - G(x_{k+1},y_{k+1}) + \gamma_{k}\psi_{k}\|^2\notag\\
&\leq (1-\gamma_{k})^2\|\Delta g_{k}\|^2  + 2(1 - \gamma_{k})\gamma_{k}\psi_{k}^T\Delta g_{k} \notag\\   
&\quad + \frac{\gamma_{k}}{2}\|\Delta g_{k}\|^2 + \frac{2}{\gamma_{k}}\|G(x_{k},y_{k}) - G(x_{k+1},y_{k+1})\|^2\notag\\
&\quad  + 2\| G(x_{k},y_{k}) - G(x_{k+1},y_{k+1})\|^2 + 2\gamma_{k}^2\|\psi_{k}\|^2\notag\\
&\leq (1-\gamma_{k})\|\Delta g_{k}\|^2 - \Big(\frac{1}{2} - \gamma_{k}\Big)\gamma_{k} \|\Delta g_{k}\|^2  + 2\gamma_{k}^2\|\psi_{k}\|^2 + 2(1-\gamma_{k})\gamma_{k}\psi_{k}^T\Delta g_{k}\notag\\   
&\quad  + \frac{4}{\gamma_{k}}\|G(x_{k},y_{k}) - G(x_{k+1},y_{k+1})\|^2,\label{lem:Delta-g:Eq1}
\end{align}
where the last inequality we use $\gamma_{k} < 1$. Since $F$ satisfies Eq.\ \eqref{assump:smooth:ineq}, the last term on the right-hand side of \eqref{lem:Delta-g:Eq1} is bounded as
\begin{align*}
\|G(x_{k},y_{k}) - G(x_{k+1},y_{k+1})\|^2&\leq L^2\big(\|x_{k} - x_{k+1}\|^2 + \|x_{k+1}- y_{k+1}\|^2\big)  \notag\\
&\leq L^2\alpha_{k}^2\big(\|\Delta f_{k}\| + L\|\xhat_{k}\| \big)^2  + L^2\beta_{k}^2\big(\|\Delta g_{k}\| + L\|\xhat_{k}\| + (1+L)^2\|\yhat_{k}\| \big)^2 \notag\\
&\leq 2L^2\alpha_{k}^2\|\Delta f_{k}\|^2 + 3L^2\beta_{k}^2\|\Delta g_{k}\|^2 + 5L^4\alpha_{k}^2\|\xhat_{k}\|^2  + 3L^4(1+L)^2\beta_{k}^2\|\yhat_{k}\|^2,
\end{align*}
which when substituting  into \eqref{lem:Delta-g:Eq1} and using $\beta_{k} \leq 1/4$ we obtain \eqref{lem:Delta-g:ineq}
\begin{align*}
\|\Delta g_{k+1}\|^2&\leq (1-\gamma_{k})\|\Delta g_{k}\|^2 - \Big(\frac{1}{2} - \gamma_{k}\Big)\gamma_{k} \|\Delta g_{k}\|^2  + 2\gamma_{k}^2\|\psi_{k}\|^2 + 2(1-\gamma_{k})\gamma_{k}\psi_{k}^T\Delta g_{k}  + \frac{12L^2\beta_{k}^2}{\gamma_{k}}\|\Delta g_{k}\|^2\notag\\   
&\quad  + \frac{8L^2\alpha_{k}^2}{\gamma_{k}}\|\Delta f_{k}\|^2  + \frac{20L^4\alpha_{k}^2}{\gamma_{k}}\|\xhat_{k}\|^2 + \frac{12L^4(1+L)^2\beta_{k}^2}{\gamma_{k}}\|\yhat_{k}\|^2\notag\\
&\leq (1-\gamma_{k})\|\Delta g_{k}\|^2 - \Big(\frac{1}{4} - \frac{12L^2\beta_{k}^2}{\gamma_{k}^2}\Big)\gamma_{k} \|\Delta g_{k}\|^2  + 2\gamma_{k}^2\|\psi_{k}\|^2+ 2(1-\gamma_{k})\gamma_{k}\psi_{k}^T\Delta g_{k}\notag\\
&\quad  + \frac{8L^2\alpha_{k}^2}{\gamma_{k}}\|\Delta f_{k}\|^2     + \frac{20L^4\alpha_{k}^2}{\gamma_{k}}\|\xhat_{k}\|^2  + \frac{12L^4(1+L)^2\beta_{k}^2}{\gamma_{k}}\|\yhat_{k}\|^2.
\end{align*}

\subsection{Proof of Lemma \ref{lem:xhat}}
Using \eqref{alg:x-accelerated} we consider 
\begin{align*}
&\xhat_{k+1} = x_{k+1} - H(y_{k+1}) = x_{k} - \alpha_{k}f_{k} - H(y_{k+1})\notag\\
&= \xhat_{k} - \alpha_{k}F(x_{k},y_{k}) - \alpha_{k}\Delta f_{k} +  H(y_{k}) - H\left(y_{k}-\beta_{k}g_{k}\right),
\end{align*}
which gives
\begin{align}
\|\xhat_{k+1}\|^2 &= \left\|\xhat_{k} - \alpha_{k}F(x_{k},y_{k}) - \alpha_{k}\Delta f_{k} +  H(y_{k}) - H\left(y_{k}-\beta_{k}g_{k}\right)\right\|^2\notag\\
&= \|\xhat_{k} - \alpha_{k}F(x_{k},y_{k})\|^2 + \left\|H(y_{k}) - H(y_{k}-\beta_{k}g_{k})\right\|^2\notag\\ 
&\quad + \alpha_{k}^2\|\Delta f_{k}\|^2 - 2\alpha_{k}\left(\xhat_{k} - \alpha_{k}F(x_{k},y_{k})\right)^T\Delta f_{k}\notag\\
&\quad + 2\left(\xhat_{k} - \alpha_{k}F(x_{k},y_{k})\right)^T\left(H(y_{k}) - H(y_{k}-\beta_{k}g_{k})\right)\notag\\
&\quad - 2\alpha_{k}\left(H(y_{k}) - H(y_{k}-\beta_{k}g_{k})\right)^T\Delta f_{k}\notag\\
&= A_{1} + A_{2} + \alpha_{k}^2\|\Delta f_{k}\|^2 + A_{3} + A_{4} + A_{5}, \label{lem:xhat:Eq1} 
\end{align}
where $A_{1},\ldots,A_{5}$ are the terms on the right-hand side of the second equation, in that order. We next analyze these terms. First, using $F(H(y_{k}),y_{k}) = 0$ and $\xhat_{k} = x_{k} - H(y_{k})$ we have 
\begin{align}
A_{1} &= \|\xhat_{k}  - \alpha_{k}F(x_{k},y_{k})\|^2\notag\\ 
&= \|\xhat_{k}\|^2 - 2\alpha_{k}\xhat_{k}^TF(x_{k},y_{k}) + \| \alpha_{k}F(x_{k},y_{k})\|^2\notag\\
&= \|\xhat_{k}\|^2 - 2\alpha_{k}(x_{k} - H(y_{k}) )^T\left(F(x_{k},y_{k}) - F(H(y_{k}),y_{k})\right) + \alpha_{k}^2\| F(x_{k},y_{k}) - F(H(y_{k}),y_{k})\|^2\notag\\
&\leq \|\xhat_{k}\|^2 - 2\mu_{F}\alpha_{k}\|x_{k}-H(y_{k})\|^2 + L^2\alpha_{k}^2\|x_{k} - H(y_{k})\|^2\notag\\ 
&= \left(1-2\mu_{F}\alpha_{k} + L^2\alpha_{k}^2\right) \|\xhat_{k}\|^2,\label{lem:xhat:Eq1a}
\end{align}
where the first inequality is due to the strong monotonicity and Lispchitz continuity of $F$. Second, using the Lipschitz continuity of $H$ and \eqref{lem:inequalities:Ineq} we obtain
\begin{align}
A_{2} &=  \left\|H(y_{k}) - H(y_{k}-\beta_{k}g_{k})\right\|^2 \leq L^2\beta_{k}^2\|g_{k}\|^2\notag\\ 
&\leq  L^2\beta_{k}^2\big(\|\Delta g_{k}\| + L\|\xhat_{k}\| + L(1+L)\|\yhat_{k}\|\big)^2\notag\\
&\leq 3L^2\beta_{k}^2\big(\|\Delta g_{k}\|^2 + L^2\|\xhat_{k}\|^2 + L^2(1+L)^2\|\yhat_{k}\|^2\big). \label{lem:xhat:Eq1b}
\end{align}
Third, using \eqref{lem:xhat:stepsizes} into \eqref{lem:xhat:Eq1a} we obtain
\begin{align*}
\|\xhat_{k}  - \alpha_{k}F(x_{k},y_{k})\| &\leq \sqrt{1-2\mu_{F}\alpha_{k} + L^2\alpha_{k}^2}\|\xhat_{k}\|\leq \|\xhat_{k}\|,
\end{align*}
which gives 
\begin{align}
A_{3} &=  -2\alpha_{k}\left(\xhat_{k} - \alpha_{k}F(x_{k},y_{k})\right)^T\Delta f_{k}\leq 2\alpha_{k}\|\xhat_{k} - \alpha_{k}F(x_{k},y_{k})\|\|\Delta f_{k}\|\notag\\
&\leq 2\alpha_{k}\|\xhat_{k}\|\|\Delta f_{k}\|.\label{lem:xhat:Eq1d}
\end{align}
Similarly, using $F(H(y_k),y_k) = 0$ we consider $A_{4}$
\begin{align}
A_{4} &= 2\left(\xhat_{k} - \alpha_{k}F(x_{k},y_{k})\right)^T\left(H(y_{k}) - H(y_{k}-\beta_{k}g_{k})\right)\leq 2L\beta_{k}\|\xhat_{k} - \alpha_{k}F(x_{k},y_{k})\|\|g_{k}\|\notag\\
&\leq 2L\beta_{k}\|\xhat_{k}\|\|g_{k}\|\leq 2L\beta_{k}\|\xhat_{k}\|\big(\|\Delta g_{k}\| + L\|\xhat_{k}\| + L(1+L)\|\yhat_{k}\|\big)\notag\\
&= 2L^2\beta_{k}\|\xhat_{k}\|^2 + 2L\beta_{k}\|\xhat_{k}\|\|\Delta g_{k}\| + 2L^2(1+L)\beta_{k}\|\xhat_{k}\|\|\yhat_{k}\|.\label{lem:xhat:Eq1c}
\end{align}
Finally, using \eqref{lem:inequalities:Ineq} one more time  we have
\begin{align}
A_{5} &= - 2\alpha_{k}\left(H(y_{k}) - H(y_{k}-\beta_{k}g_{k})\right)^T\Delta f_{k}\leq 2L\alpha_{k}\beta_{k}\|g_{k}\|\|\Delta f_{k}\|\notag\\ 
&\hspace{-0.2cm}\leq 2L\alpha_{k}\beta_{k}\big(\|\Delta g_{k}\| + L\|\xhat_{k}\| + L(1+L)\|\yhat_{k}\|\big)\|\Delta f_{k}\|.\label{lem:xhat:Eq1e}
\end{align}
Substituting Eqs.\ \eqref{lem:xhat:Eq1a}--\eqref{lem:xhat:Eq1e} into \eqref{lem:xhat:Eq1} we obtain \eqref{lem:xhat:Ineq}, i.e.,
\begin{align*}
\|\xhat_{k+1}\|^2&\leq  \left(1-2\mu_{F}\alpha_{k} + L^2\alpha_{k}^2\right) \|\xhat_{k}\|^2 + \alpha_{k}^2\|\nabla f_{k}\|^2\notag\\
&\quad + 3L^2\beta_{k}^2\big(\|\Delta g_{k}\|^2 + L^2\|\xhat_{k}\|^2 + L^2(1+L)^2\|\yhat_{k}\|^2\big)\notag\\
&\quad + 2\alpha_{k}\|\xhat_{k}\|\|\Delta f_{k}\|+ 2L^2\beta_{k}\|\xhat_{k}\|^2\notag\\ 
&\quad + 2L\beta_{k}\|\xhat_{k}\|\|\Delta g_{k}\| + 2L^2(1+L)\beta_{k}\|\xhat_{k}\|\|\yhat_{k}\|\notag\\
&\quad +2L\alpha_{k}\beta_{k}\big(\|\Delta g_{k}\| + L\|\xhat_{k}\| + L(1+L)\|\yhat_{k}\|\big)\|\Delta f_{k}\|\notag\\
&\leq  \left(1-2\mu_{F}\alpha_{k} + L^2\alpha_{k}^2\right)  + \alpha_{k}^2\|\Delta f_{k}\|^2\notag\\ 
&\quad +  3L^2\beta_{k}^2\big(\|\Delta g_{k}\|^2 + L^2\|\xhat_{k}\|^2 + L^2(1+L)^2\|\yhat_{k}\|^2\big)\notag\\
&\quad + \frac{\mu_{F}\alpha_{k}}{2}\|\xhat_{k}\|^2 + \frac{2}{\mu_{F}}\alpha_{k}\|\nabla f_{k}\|^2 + 2L^2\beta_{k}\|\xhat_{k}\|^2 \notag\\ 
&\quad + L^2\beta_{k}\|\xhat_{k}\|^2 + \beta_{k}\|\Delta g_{k}\|^2\notag\\
&\quad + \frac{4L^4(1+L)^2}{\mu_{G}}\beta_{k}\|\xhat_{k}\|^2 + \frac{\mu_{G}\beta_{k}}{4}\|\yhat_{k}\|^2 \notag\\
&\quad  + L^4\alpha_{k}\beta_{k}\|\xhat_{k}\|^2 + 3\alpha_{k}\beta_{k}\|\Delta f_{k}\|^2 \notag\\ 
&\quad + L^2\alpha_{k}\beta_{k}\big(\|\Delta g_{k}\|^2 + L^2(1+L)^2\|\|\yhat_{k}\|^2\big)\notag\\
&\leq \left(1 -\mu_{F}\alpha_{k}\right)\|\xhat_{k}\|^2 + L^2\big(4L^2 + 1\big)\alpha_{k}^2\|\xhat_{k}\|^2\notag\\ 
&\quad - \left[\frac{\mu_{F}\alpha_{k}}{2} - \frac{4L^3\big(1+L(1+L)^2\big)\beta_{k}}{\mu_{G}}\right]\|\xhat_{k}\|^2\notag\\
&\quad + \frac{\mu_{G}\beta_{k}}{4}\|\yhat_{k}\|^2 + 4L^4(1+L)^{2} \alpha_{k}\beta_{k}\|\yhat_{k}\|^2\notag\\
&\quad + \left[\frac{2\alpha_{k}}{\mu_{F}} + 4\alpha_{k}^2\right]\|\Delta f_{k}\|^2 + \left[\beta_{k} + 4L^2\alpha_{k}\beta_{k} \right]\|\Delta g_{k}\|^2,
\end{align*}
where the last inequality we use $\beta_{k}\leq \alpha_{k}$ and $\mu_{G}\leq L$.

\subsection{Proof of Lemma \ref{lem:yhat}}
Using \eqref{alg:y-accelerated} we consider 
\begin{align*}
\yhat_{k+1} &= y_{k+1} - y^{\star} = \yhat_{k} - \beta_{k}g_{k}\notag\\ 
&= \yhat_{k} - \beta_{k}G(H(y_{k}),y_{k}) +\beta_{k}(G(H(y_{k}),y_{k}) - G(x_{k},y_{k})) - \beta_{k}\Delta g_{k},
\end{align*}    
which gives
\begin{align}
\|\yhat_{k+1}\|^2&= \|\yhat_{k} - \beta_{k}G(H(y_{k}),y_{k})\|^2 + \|\beta_{k}(G(H(y_{k}),y_{k}) - G(x_{k},y_{k})) - \beta_{k}\Delta g_{k}\|^2\notag\\
&\quad + 2\beta_{k}(\yhat_{k} - \beta_{k}G(H(y_{k}),y_{k})^T(G(H(y_{k}),y_{k}) - G(x_{k},y_{k}))\notag\\
&\quad - 2\beta_{k}(\yhat_{k} - \beta_{k}G(H(y_{k}),y_{k})^T\Delta g_{k}\notag\\
&\leq  \|\yhat_{k} - \beta_{k}G(H(y_{k}),y_{k})\|^2 + 2\beta_{k}^2\|\Delta g_{k}\|^2 + 2\beta_{k}^2\|G(H(y_{k}),y_{k}) - G(x_{k},y_{k})\|^2\notag\\ 
&\quad + 2\beta_{k}(\yhat_{k} - \beta_{k}G(H(y_{k}),y_{k})^T(G(H(y_{k}),y_{k}) - G(x_{k},y_{k}))\notag\\
&\quad - 2\beta_{k}(\yhat_{k} - \beta_{k}G(H(y_{k}),y_{k})^T\Delta g_{k}.\label{lem:yhat:Eq1}
\end{align}
First, using the strong monotonicity of $G$ and $G(H(y^{\star}),y^{\star}) = 0$ we have
\begin{align}
\|\yhat_{k} - \beta_{k}G(H(y_{k}),y_{k})\|^2 &= \|\yhat_{k}\|^2 + \beta_{k}^2\|G(H(y_{k}),y_{k})-G(H(y^{\star}),y^{\star})\|^2 - 2\beta_{k}(y_{k}-y^{\star})^TG(x_{k},y_{k})\notag\\
&\leq (1 - 2\mu_{G}\beta_{k} + L^2(L+1)^2\beta_{k}^2)\|\yhat_{k}\|^2.\label{lem:yhat:Eq1a}
\end{align}
Next, since  $\beta_{k} \leq \mu_{G}/(4L^2(1+L)^2)$ we have from the preceding relation that $\|\yhat_{k} - \beta_{k}G(H(y_{k}),y_{k})\| \leq \|\yhat_{k}\|$. Thus, using \eqref{lem:yhat:Eq1a} we consider
\begin{align}
&2\beta_{k}(\yhat_{k} - \beta_{k}G(H(y_{k}),y_{k})^T(G(H(y_{k}),y_{k}) - G(x_{k},y_{k}) - \Delta g_{k})\notag\\ 
&\leq 2\beta_{k} \|\yhat_{k} - \beta_{k}G(H(y_{k}),y_{k})\|(L\|\xhat_{k}\| + \Delta g_{k})\notag\\
&\leq 2\beta_{k}\|\yhat_{k}\|\big(L\|\xhat_{k}\| + \|\Delta g_{k}\|\big)\notag\\
&\leq \frac{\mu_{G}\beta_{k}}{4}\|\yhat_{k}\|^2 + \frac{8\beta_{k}}{\mu_{G}}\big(L^2\|\xhat_{k}\|^2 + \|\Delta g_{k}\|^2\big)
\label{lem:yhat:Eq1b}
\end{align}
Substituting \eqref{lem:yhat:Eq1a} and \eqref{lem:yhat:Eq1b} into \eqref{lem:yhat:Eq1} and using \eqref{lem:yhat:stepsizes} we obtain \eqref{lem:yhat:Ineq}, i.e., 
\begin{align*}
\|\yhat_{k+1}\|^2 &\leq  (1 - 2\mu_{G}\beta_{k} + L^2(L+1)^2\beta_{k}^2)\|\yhat_{k}\|^2 + 2\beta_{k}^2\|\Delta g_{k}\|^2\notag\\ 
&\quad + 2L^2\beta_{k}^2\|\xhat_{k}\|^2 + \frac{\mu_{G}\beta_{k}}{4}\|\yhat_{k}\|^2 + \frac{8\beta_{k}}{\mu_{G}}\big(L^2\|\xhat_{k}\|^2 + \|\Delta g_{k}\|^2\big)\\
&\leq (1 - \mu_{G}\beta_{k})\|\yhat_{k}\|^2 - \left(\frac{3\mu_{G}\beta_{k}}{4} - L^2(L+1)^2\beta_{k}^2 \right)\|\yhat_{k}\|^2\notag\\ 
&\quad + 2L^2\Big(\frac{4\beta_{k}}{\mu_{G}} + \beta_{k}^2\Big)\|\xhat_{k}\|^2 + \left(\frac{8\beta_{k}}{\mu_{G}} + 2\beta_{k}^2\right)  \|\Delta g_{k}\|^2\notag\\
&\leq (1 - \mu_{G}\beta_{k})\|\yhat_{k}\|^2 - \frac{\mu_{G}\beta_{k}}{2}\|\yhat_{k}\|^2  + 2L^2\Big(\frac{4\beta_{k}}{\mu_{G}} + \beta_{k}^2\Big)\|\xhat_{k}\|^2 + \left(\frac{8\beta_{k}}{\mu_{G}} + 2\beta_{k}^2\right)  \|\Delta g_{k}\|^2.
\end{align*}

\end{document}